\definecolor{kanzou}{RGB}{255, 137, 54}
\newcommand*\linenomathpatch[1]{%
  \expandafter\pretocmd\csname #1\endcsname {\linenomath}{}{}%
  \expandafter\pretocmd\csname #1*\endcsname{\linenomath}{}{}%
  \expandafter\apptocmd\csname end#1\endcsname {\endlinenomath}{}{}%
  \expandafter\apptocmd\csname end#1*\endcsname{\endlinenomath}{}{}%
}
\newcommand*\linenomathpatchAMS[1]{%
  \expandafter\pretocmd\csname #1\endcsname {\linenomathAMS}{}{}%
  \expandafter\pretocmd\csname #1*\endcsname{\linenomathAMS}{}{}%
  \expandafter\apptocmd\csname end#1\endcsname {\endlinenomath}{}{}%
  \expandafter\apptocmd\csname end#1*\endcsname{\endlinenomath}{}{}%
}
  \let\linenomathAMS\linenomathWithnumbers
  \patchcmd\linenomathAMS{\advance\postdisplaypenalty\linenopenalty}{}{}{}
  \let\linenomathAMS\linenomathNonumbers
\DeclareFontFamily{OT1}{rsfs}{}
\DeclareFontShape{OT1}{rsfs}{n}{it}{<-> rsfs10}{}
\DeclareMathAlphabet{\mathscr}{OT1}{rsfs}{n}{it}
\theoremstyle{plain} 
\newtheorem{thm}{Theorem}[section]
\newtheorem{cor}[thm]{Corollary}
\newtheorem{prop}[thm]{Proposition}
\newtheorem{lem}[thm]{Lemma}
\theoremstyle{definition}
\newtheorem{defn}[thm]{Definition}
\theoremstyle{remark}
\def\paragraph{\@startsection{paragraph}{4}%
  \z@\z@{-\fontdimen2\font}%
  {\normalfont\it}}
\renewcommand{\and}{\qquad \text{and} \qquad}
\newcommand{\Z}{\mathds{Z}}
\newcommand{\R}{\mathds{R}}
\newcommand{\N}{\mathds{N}}
\newcommand{\E}{\mathds{E}}
\renewcommand{\P}{\mathds{P}}
\renewcommand{\phi}{\varphi}
\newcommand{\charf}{\mathds{1}}
\newcommand{\Tau}{\mathrm{T}}
\newcommand{\G}{\mathcal{G}}
\newcommand{\V}{\mathbb{V}}
\newcommand{\st}{\texttt{ST}}
\newcommand{\vi}{\texttt{Vic}}
\NewDocumentCommand{\cl}{som}{%
  \IfBooleanTF{#1}
    {\oldnormaux*{#3}}
    {\IfNoValueTF{#2}
       {\oldnormaux*{\vphantom{dq}#3}}
       {\oldnormaux[#2]{#3}}%
    }%
}
\newcommand{\cpdot}{\; \cdot \;}
\newcommand{\syst}{(\Gamma, K, \pi)}
\newcommand{\assumptions}[1]{$(\El), (\VD), (\PI(\theta))$, and $(\CS(\theta))$, for some $\theta \geq 2$}
\newcommand{\rwassumptions}{Consider the random walk $\{X_n\}_{n \geq 0}$ driven by $\syst$.}
\newcommand{\volsum}[1]{\sum_{d(x,y)^\theta \leq n \leq 2D^\theta}\frac{#1}{V(x,n^{1/\theta})}}
\newcommand{\volsumo}[2]{\sum_{0 \leq n \leq 2D^\theta}\frac{#1}{V(#2,n^{1/\theta})}}
\renewcommand{\a}{a_{N-2}}
\newcommand{\ai}{a_{N-1}}
\newcommand{\aii}{a_N}
\newcommand{\aint}[1]{A_{[0;#1]}}
\DeclareMathOperator{\El}{\ref{eq:E}}
\DeclareMathOperator{\VD}{\ref{eq:VD}}
\DeclareMathOperator{\PI}{\ref{eq:PI}}
\DeclareMathOperator{\CS}{\hyperref[def:CS]{CS}}
\DeclareMathOperator*{\argmin}{argmin}
\numberwithin{equation}{section}
\title{Expected hitting time estimates on finite graphs}
\author{Laurent Saloff-Coste}
\address{Department of Mathematics, Cornell University, USA.}
\email{lsc@math.cornell.edu}
\thanks{L.~Saloff-Coste's research was partially supported by NSF grant DMS-2054593.}
\author{Yuwen Wang}
\address{Institut für Mathematik,  Universität Innsbruck, Austria.}
\email{ywang@math.cornell.edu}
\thanks{Y.~Wang's research is supported by the Austrian Science Fund (FWF) project (P 34129), and partially supported by the Universität Innsbruck Early Stage Funding Program.}
\subjclass{60J10}
\keywords{Hitting time, random walks, Green's function, heat kernel estimates}
\begin{document}


\begin{abstract}
  The expected hitting time from vertex $a$ to vertex $b$, $H(a,b)$, is the expected value of the time it takes a random walk starting at $a$ to reach $b$. 
  In this paper, we give estimates for $H(a,b)$ when the distance between $a$ and $b$ is comparable to the diameter of the graph, and the graph satisfies a Harnack condition.
  We show that, in such cases, $H(a,b)$ can be estimated in terms of the volumes of balls around $b$. Using our results, we estimate  $H(a,b)$ on various graphs, such as rectangular tori, some convex traces in $\Z^d$, and fractal graphs. Our proofs use heat kernel estimates. 
\end{abstract}

\maketitle


\section{Introduction}
Given a Markov chain on a graph (for us here, a finite graph), there are many reasons to be interested in the random variables  $\tau_{b}$, the time it takes for the Markov chain to make its first visit at the vertex $b$. An excellent introduction is in \cite{levinMarkovChainsMixing2017}. See also \cite{aldous-fill-2014}. In the case of  simple random walk on the $d$-torus $(\mathbb Z/N\mathbb Z)^d$, when $d$ is fixed and $N$ is a varying parameter, and for vertices $a,b$ at distance of order $N$ of each other, $$H(a,b)=\E_a[\tau_{b}]\asymp\left\{\begin{array}{cl} N^2& \mbox{ if } d=1,\\N^2\log N& \mbox{ if } d=2,\\ N^{d} &\mbox{ if  }d>2.\end{array}\right.$$
The goal of this work is try to explain these behaviors in geometric terms so that they can be extended beyond very specific examples such as the $(\mathbb Z/N\mathbb Z)^d$.  For this, we use heat kernel techniques. Comparing to  \cite[Proposition 10.21]{levinMarkovChainsMixing2017}  which gives the behavior of $\E_a[\tau_{b}]$ on  $(\mathbb Z/N\mathbb Z)^d$ in terms of the distance between $a$ and $b$ even when $a,b$ are relatively close to each other, 
our results are (mostly) limited to the case when the distance between vertices $a$ and $b$ is of order the diameter of the graph. However, we also provide a simple  condition that imply  that $\E_a[\tau_b]\asymp \E_a[\tau_c]$
for any vertices $b,c\neq a$, as it happens for instance on $(\mathbb Z/N\mathbb Z)^d$ when $d\ge 3$. From the point of view of heat kernel estimates, treating the small distance case in the most sensitive cases would require gradient (i.e., difference) estimates which are much more difficult to obtain than the heat kernel estimates themselves.

Our focus is to understand how the torus result cited above generalizes to a variety of other examples of a similar type, from rectangular tori $\prod_1^N(\mathbb Z/a_i\mathbb Z)$, $a_1\le \dots\le a_N$, to Cayley graphs of finite nilpotent groups such as the group of $3$ by $3$ upper-triangular matrices with entries mod $N$ and entries equal to $1$ on the diagonal, to lattice traces on simple convex subsets of $\mathbb R^2 $ or $\mathbb R^3$, and classes of finite fractal graph  such as the $N$-iteration in the construction of the Sierpinski gasket. See the examples in Section \ref{sect:exs}. The common thread amongst all these examples is that they are all \emph{Harnack graphs} which means that they are all amenable to sharp two-sided \emph{heat kernel bounds}. 
Here, the \emph{heat kernel} refers to the discrete time iterated kernel of the natural random walk on the graph in question, and  \emph{heat kernel bounds} refers to estimates based on basic geometric quantities such graph distance, $d$,  and volume of balls with respect to the underlying reversible probability measure $\pi$, $V(x,r)=\pi(\{y:d(x,y)\le r))$. The definition of \emph{Harnack graph} involves an important parameter, $\theta\ge 2$, and can be given in several equivalent forms. Here, we use a characterization based on four conditions, \assumptions{}. Our main result can be stated as follows.
\begin{thm} \label{thm-0}
    Suppose that the Markov kernel $\syst$ (on a finite graph $\Gamma = (\V, E)$) satisfies \assumptions{}.
Let $x,y \in \V$ such that $d(x,y) \geq c_0 D$ for some $c_0 > 0$.
Then, there exists constants $C_1, C_2 > 0$ such that
\begin{equation*}
\volsumo{C_1}{y} \leq H(x,y) \leq \volsumo{C_2}{y}.
\end{equation*}\end{thm}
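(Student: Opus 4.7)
My plan is to exploit the classical identity
$$
\pi(y)\,H(x,y) \;=\; \sum_{n=0}^{\infty}\bigl[K^n(y,y) - K^n(x,y)\bigr] \;=\; Z_{yy} - Z_{xy},
$$
where $Z_{xy} = \sum_{n\geq 0}[K^n(x,y)-\pi(y)]$ is the fundamental-matrix entry; this follows from the spectral decomposition (the $\lambda_0 = 1$ mode cancels in the difference) and converges geometrically under the spectral gap $1-\lambda_1 \geq c\,D^{-\theta}$ supplied by $(\PI(\theta))$. The Harnack hypotheses $(\El),(\VD),(\PI(\theta)),(\CS(\theta))$ deliver two-sided heat kernel estimates: for $n$ up to the diameter time $\asymp D^\theta$, the on-diagonal comparison $k^n(y,y) \asymp 1/V(y, n^{1/\theta})$ and the off-diagonal bound $k^n(x,y) \leq C V(y, n^{1/\theta})^{-1}\exp(-c(d(x,y)^\theta/n)^{1/(\theta-1)})$, where $k^n = K^n/\pi(\cdot)$. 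In particular,
$$
\frac{Z_{yy}}{\pi(y)} \;=\; \sum_{n\geq 0}\bigl(k^n(y,y)-1\bigr) \;\asymp\; \sum_{n\leq 2D^\theta}\frac{1}{V(y, n^{1/\theta})},
$$
which identifies the target quantity with $Z_{yy}/\pi(y)$ up to constants.

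For the upper bound, the pointwise Cauchy--Schwarz $|k^n(x,y)-1|^2 \leq (k^n(x,x)-1)(k^n(y,y)-1)$ together with a further Cauchy--Schwarz on the summation in $n$ yields $|Z_{xy}/\pi(y)| \leq \sqrt{(Z_{xx}/\pi(x))(Z_{yy}/\pi(y))}$. Volume doubling and the on-diagonal comparison at $x$ and at $y$ give $Z_{xx}/\pi(x) \asymp Z_{yy}/\pi(y)$, so
$$H(x,y) \;=\; \frac{Z_{yy}-Z_{xy}}{\pi(y)} \;\leq\; C\,\frac{Z_{yy}}{\pi(y)} \;\asymp\; \sum_{n\leq 2D^\theta}\frac{1}{V(y, n^{1/\theta})}.$$

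For the lower bound, I invoke $d(x,y)\geq c_0 D$ to sharpen control on $Z_{xy}/\pi(y)$. Writing it as $\sum_n (k^n(x,y)-1)$, I split at $n_0 = \alpha D^\theta$ with $\alpha>0$ small (to be chosen): on $n \leq n_0$, the sub-Gaussian factor $\exp(-c(d(x,y)^\theta/n)^{1/(\theta-1)}) \leq \exp(-c(c_0^\theta/\alpha)^{1/(\theta-1)})$ forces $k^n(x,y)$ to be small (and, crucially, bounded by $1$ thanks to $d(x,y)\geq c_0 D$ and $(\VD)$), so each summand is close to $-1$ and the head contributes roughly $-n_0$ times a uniform positive constant. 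On $n > n_0$ the contribution is bounded in absolute value by a constant times target, via the spectral-gap decay and the same Cauchy--Schwarz used for the upper bound. Choosing $\alpha$ small enough (depending only on $c_0$ and the heat kernel constants) forces $Z_{xy}/\pi(y) \leq \tfrac12 Z_{yy}/\pi(y)$, whence $H(x,y) \geq \tfrac12 Z_{yy}/\pi(y) \asymp$ target. A final appeal to $(\VD)$ shows that truncating at any constant multiple of $D^\theta$ changes the sum $\sum_{n\leq \cdot} 1/V(y, n^{1/\theta})$ only by a constant.

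The main obstacle is exactly this lower-bound calculation: producing a quantitative strict inequality $Z_{xy}/\pi(y) < Z_{yy}/\pi(y)$ requires a careful balancing of the strongly-negative head (built from the sub-Gaussian off-diagonal decay) against the tail. This is the only place where the global hypothesis $d(x,y)\geq c_0 D$ is genuinely used: without it, $k^n(x,y)$ could be comparable to $k^n(y,y)$ throughout the relevant range and $H(x,y)$ would not be controlled by on-diagonal quantities alone. As the introduction notes, relaxing the diameter-scale hypothesis on $d(x,y)$ would require sharp gradient heat-kernel estimates, which are substantially harder than the two-sided pointwise bounds invoked here.
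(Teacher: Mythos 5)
Your starting identity $H(x,y)=\G(y,y)-\G(x,y)$ and the idea of sizing $\G(y,y)$ via the two-sided heat-kernel estimates are exactly the paper's; the divergence, and the trouble, lies in how you control $\G(x,y)$ and how you convert that control into a lower bound.

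\textbf{Upper bound.} Your step ``volume doubling and the on-diagonal comparison give $Z_{xx}/\pi(x)\asymp Z_{yy}/\pi(y)$'' is false in the Harnack class. The hypotheses \assumptions{} do not force Ahlfors regularity or vertex transitivity, and $\G(x,x)\asymp\sum_{n\le 2D^\theta}1/V(x,n^{1/\theta})$ genuinely depends on the volume profile near $x$. The paper's own birth--death example (weight $w_k=(1+k)^\alpha$) has $\G(0,0)\asymp N^{\alpha+1}$ while $\G(N,N)\asymp N^2$, which are off by an arbitrary polynomial factor when $\alpha>1$; with $x=0$, $y=N$, your Cauchy--Schwarz bound $|\G(x,y)|\le\sqrt{\G(x,x)\G(y,y)}\asymp N^{(3+\alpha)/2}$ badly overshoots $\G(y,y)\asymp N^2$, so $H(x,y)\le\G(y,y)+|\G(x,y)|$ does not reproduce the claimed $\asymp N^2$. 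The issue is structural: Cauchy--Schwarz throws away the sub-Gaussian off-diagonal suppression. The paper's Proposition~3.1 keeps it and gives the truncated bound $|\G(x,y)|\le C\sum_{d(x,y)^\theta\le n\le 2D^\theta}1/V(x,n^{1/\theta})$; crucially the lower limit $n\ge d(x,y)^\theta$ means the sum only sees scales $n^{1/\theta}\ge d(x,y)$, where doubling makes $V(x,n^{1/\theta})\asymp V(y,n^{1/\theta})$, so the bound is $\lesssim\G(y,y)$ regardless of the profile at short scales.

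\textbf{Lower bound.} Even setting the above aside, the head/tail split does not deliver $\G(x,y)\le\tfrac12\G(y,y)$. The head contributes roughly $-c\,\alpha D^\theta$, which is fine, but the tail is only bounded above by the heat-kernel comparison constants times $\G(y,y)$, and those constants are $\ge1$, not $<\tfrac12$. In the regime $\G(y,y)\asymp D^\theta$ (one-dimensional or slow volume growth), the tail error term is itself of order $D^\theta$ with an unfavourable constant, and there is no way to absorb it by choosing $\alpha$ small. This is precisely where the paper brings in a separate, non-Green-function ingredient: the exit-time estimate (Lemma~5.1, built on Lemmas~4.1 and~4.2) which shows $H(x,y)\ge cD^\theta$ directly, probabilistically, for $d(x,y)\ge c_0D$. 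The final lower bound is the maximum
\[
H(x,y)\ \ge\ \max\left\{\ \sum_{0\le n\le 2D^\theta}\frac{c_1}{V(y,n^{1/\theta})}\ -\ c_2D^\theta,\ \ c_3D^\theta\ \right\},
\]
where the first entry comes from $\G(y,y)\gtrsim\sum 1/V(y,\cdot)-CD^\theta$ (Proposition~3.3) together with $\G(x,y)\ge-CD^\theta$ (Corollary~3.2), and the second entry comes from the exit-time estimate. Whichever of the two entries is larger handles its regime. Without the exit-time lemma you have no tool to cover the case where the target sum is $\asymp D^\theta$, and the argument collapses.

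\end{document}
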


The structure of the article is as follows. Section \ref{sec-back} sets notation and review key identity relying hitting times expectation to the iterated kernel of the chain. It also review deep iterated kernel estimates related to the notion of Harnack graph. Section \ref{sec-Green} derives the key estimates for what we call the "Green function." Section \ref{sec-Hit} applies the results of the previous sections to obtain the main result of this paper, Theorem \ref{thm-0} (also Theorem \ref{thm:main}). The last section describes various examples.

\section{Background}\label{sec-back}

\subsection{Random walks preliminaries}

Let $\Gamma = (\V,E)$ be a finite graph where $\V$ is the vertex set and $E\subset \V \times \V$ is the symmetric edge set. We write $x\sim y$ to signify that $(x,y)\in E$. We assume the graph is connected in the sense that one can join any two vertices by a path crossing edges.
Our random process of interest is an aperiodic, irreducible, and reversible Markov chain $K: \V \times \V \to \R$ with reversible probability measure $\pi: \V\to [0,1]$ (we use the same notation for the probability measure and its probability mass function).
To relate $K$ to the graph structure, we specify that $K(x,y)>0$ if $x\sim y$ and $K(x,y) = 0$ for all $x \neq y$ such that $x \not \sim y$. The iterated kernel is defined inductively by $K^n(x,y)=\sum_z K(x,z)K^{n-1}(z,y).$ Reversibility means that $\pi(x)K(x,y)=\pi(y)K(y,x)$ and this implies that $\pi$ is an invariant measure for $K$, $\sum_x \pi(x)K(x,y)=\pi(y)$. We say that $K$ is lazy if $K(x,x)\ge 1/2$ for all $x\in\V$. When $K$ is lazy, it is obviously aperiodic.
Let $d(x,y)$ to be the graph distance on $\Gamma$, i.e. the minimal number of edges one must cross going from $x$ to $y$. 
The \emph{diameter} of the graph, 
\[D = \max_{x,y \in \V} d(x,y), \] 
is  one of the key geometric parameter we will use.
Note that, for all $x \neq y$ and $0 \leq n < d(x,y)$, we have $K^n(x,y) = 0$. 

For the remainder of this article, we will refer to these objects  as the \emph{Markov kernel $(\Gamma, K, \pi)$}.
The \emph{normalized kernel} is 
\begin{equation*}
k^n(x,y) = \frac{K^n(x,y)}{\pi(y)}, \quad \text{for } n \geq 0.
\end{equation*}

Each Markov kernel $\syst$ defines a \emph{random walk $\{X_n\}_{n \geq 0}$ driven by $K$}, where $X_0$ has an initial distribution $\nu$ on $\V$ and for all $n \geq 1$ and $y \in \V$,
\[
\P_\nu(X_{n+1} = y) = \sum_{x \in \V} \nu(x) K^n(x,y).
\]
For any $x \in \V$, it is convenient to define the operators 
\[
  \P_x ( \cpdot ) = \P_x ( \cpdot | X_0 = x) \qquad \text{and} \qquad \E_x [ \cpdot ] = \E_x [ \cpdot | X_0 = x].
\]
The \emph{hitting time of $a \in \V$} is
$
  \tau_a = \min\{n\geq 0: X_n =a\}.
$
For $a, b \in \V$, the \emph{expected hitting time of $b$ starting at $a$} is
\begin{equation}
  H(a,b) = \E_a[\tau_b]. 
   \label{eq:exphittime}
\end{equation}
In general, this is not a symmetric function of $a,b$.
The \emph{exit time of a set $B \subseteq \V$} is
\begin{equation}
\Tau_B = \inf \{ t :X_n \not \in  B \}.
\label{eq:exittime}
\end{equation}
For $x \in \V$ and $r \geq 0$, we define the (closed) balls of radius $r$ centered at $x$ as
\begin{equation*}
B(x,r) = \{y \in \V: d(x,y) \leq r \},
\end{equation*}
and its volume as $V(x,r) = \pi(B(x,r)) = \sum_{y \in B(x,r)} \pi(y)$.

\subsection{Discrete Laplacian, Green's function, and expected hitting time}

\begin{defn}
Define the \emph{identity operator}, $I: \V \times \V$, $I(x,y) = \mathbf 1_{\{x\}}(y)$. 
The \emph{random walk Laplacian} is $\Delta = I - K$, and the \emph{(normalized) discrete Green's function} is the function $\G: \V \times \V \to \R$, where
\begin{equation}
\G (x,y) = \sum_{n = 0}^\infty (k^n(x,y) - 1).
\label{eq:green}
\end{equation}
For convenience, we will refer to $\Delta$ as \emph{the Laplacian} 
and $\G$ as \emph{the Green function}.
\end{defn}

At this point, it will useful to establish some basic facts about these operators.
Set 
\begin{equation*}
  G(x,y) = \pi(y) \G(x,y) = \sum_{n=0}^\infty (K^n(x,y)-\pi(y)).
\end{equation*}
First, note that  $\sum_y G(x,y)=\sum_y\G(x,y)\pi(y)=0$. 
Second, $\G$ is symmetric by reversibility, but the same does not apply to $G$.
Third, when viewed as operators (or matrices) acting on functions satisfying $\pi(f)=0$, the Laplacian, $\Delta$, and the Green function,  $G$, are inverse of each other. More specifically, for functions $f$ such that $\pi(f) = 0$, $[G \Delta] f = f$ as seen from the following lemma. 

\begin{lem}\label{lemGD}
  \begin{equation*}
    [G \Delta] (x,y) = I(x,y) - \pi(y)
  \end{equation*}
  \label{lem:GDinv}
\end{lem}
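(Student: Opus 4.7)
The plan is to expand $[G\Delta](x,y)$ using $\Delta = I - K$, then compute the convolution $\sum_z G(x,z) K(z,y)$ by interchanging the order of summation in the defining series of $G$, and finally collapse the result via a shift of index. The interchange is harmless because $\Gamma$ is finite, irreducible and aperiodic, so $|K^n(x,y) - \pi(y)|$ decays geometrically in $n$, uniformly in $x,y$, and the series in \eqref{eq:green} converges absolutely (this is the only analytic point to address, and it is standard).

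Concretely, first I would write
\begin{equation*}
[G\Delta](x,y) = \sum_{z \in \V} G(x,z)\bigl(I(z,y) - K(z,y)\bigr) = G(x,y) - \sum_{z \in \V} G(x,z) K(z,y).
\end{equation*}
For the second term, substitute the series definition of $G$ and swap sums:
\begin{equation*}
\sum_{z} G(x,z)K(z,y) = \sum_{n=0}^\infty \sum_{z} \bigl(K^n(x,z) - \pi(z)\bigr) K(z,y) = \sum_{n=0}^\infty \bigl(K^{n+1}(x,y) - \pi(y)\bigr),
\end{equation*}
where the last equality uses the invariance identity $\sum_z \pi(z) K(z,y) = \pi(y)$ recalled in the preceding discussion. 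Re-indexing $m = n+1$ gives
\begin{equation*}
\sum_{z} G(x,z)K(z,y) = \sum_{m=1}^\infty \bigl(K^m(x,y) - \pi(y)\bigr) = G(x,y) - \bigl(K^0(x,y) - \pi(y)\bigr) = G(x,y) - I(x,y) + \pi(y).
\end{equation*}
Subtracting this from $G(x,y)$ yields $[G\Delta](x,y) = I(x,y) - \pi(y)$, as desired.

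The main (and only) obstacle is justifying the interchange of the $z$-sum and the $n$-sum, which requires knowing that $K^n(x,z) \to \pi(z)$ fast enough. Since $\V$ is finite and $(K,\pi)$ is aperiodic, irreducible and reversible, there is a spectral gap $\delta > 0$ giving $|K^n(x,z) - \pi(z)| \leq C e^{-\delta n}$, so Fubini applies term-by-term against the finite measure $K(\cdot, y)$. Everything else is a one-line telescoping computation.
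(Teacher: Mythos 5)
Your proof is correct and follows essentially the same route as the paper: compute $[GK](x,y)$ by interchanging the $z$- and $n$-sums, use invariance of $\pi$ to collapse the $z$-sum, re-index, and then apply $\Delta = I - K$. The only difference is that you explicitly justify the interchange via the spectral gap, a point the paper leaves implicit.
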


\begin{proof}
  Compute
    \begin{align*}
    [G K] (x,y) &= \sum_{z \in \V} \sum_{n=0}^\infty(K^n(x,z) - \pi(z)) K(z,y)
    = \sum_{n=0}^\infty (K^{n+1}(x,y) - \pi(y))
    = G(x,y) - (I(x,y) - \pi(y)).
  \end{align*}
  Then by the definition of the discrete Laplacian,
  \begin{align*}
    [G\Delta] (x,y) &= G(x,y) - [G K] (x,y)
    = G(x,y) - G(x,y) + I(x,y) - \pi(y)
    = I(x,y) - \pi(y). \qedhere
  \end{align*}
\end{proof}

\begin{lem}
  Let $H: \V \times \V \to \R$ be any kernel  such that $H(x,x) = 0$ for all $x \in \V$.
  Then, we have
  \begin{equation*}
    [G \Delta H] (x,x) = - \sum_{y \in \V} \pi(y)H(y,x).
  \end{equation*}
  \label{lem:diagonal}
\end{lem}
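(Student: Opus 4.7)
The plan is to treat this as a direct corollary of the previous Lemma \ref{lemGD}. The key observation is that the quantity $[G\Delta H](x,x)$ can be factored as $(G\Delta) \cdot H$ via associativity of kernel composition, at which point we can plug in the explicit formula $[G\Delta](x,y) = I(x,y) - \pi(y)$ already established.

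First, I would write, using associativity,
\begin{equation*}
[G\Delta H](x,x) = \sum_{y \in \V} [G\Delta](x,y)\, H(y,x).
\end{equation*}
Applying Lemma \ref{lemGD} to the kernel $[G\Delta](x,y)$ gives
\begin{equation*}
[G\Delta H](x,x) = \sum_{y \in \V} \bigl(I(x,y) - \pi(y)\bigr) H(y,x) = H(x,x) - \sum_{y \in \V} \pi(y)\, H(y,x).
\end{equation*}
Finally, the hypothesis $H(x,x) = 0$ eliminates the first term, yielding the desired identity.

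The main (and only) content is the invocation of the previous lemma; there is no obstacle, as the argument is purely algebraic manipulation of kernels once one recognizes that the vanishing of $H$ on the diagonal is exactly what is needed to cancel the $I(x,y)$ contribution. Associativity of kernel multiplication is unproblematic here because the sums are finite.
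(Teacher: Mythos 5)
Your proof is correct and is essentially the same argument as in the paper: both invoke Lemma \ref{lemGD} to replace $[G\Delta](x,y)$ by $I(x,y)-\pi(y)$ in the finite sum and then use $H(x,x)=0$ to drop the identity term. The only cosmetic difference is that the paper writes the general identity $H(x,y)=[G\Delta H](x,y)+\sum_z\pi(z)H(z,y)$ and then specializes to $y=x$, whereas you compute $[G\Delta H](x,x)$ directly.
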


\begin{proof}
  Using Lemma \ref{lem:GDinv}, for all $x,y \in \V$, we have
    \begin{align*}
    H(x,y)  &= \sum_{z \in \V} ([G \Delta] (x,z) + \pi(z)) H(z,y)
    = [G \Delta H] (x,y) + \sum_{z \in \V} \pi(z) H(z,y).
    \end{align*}
  By the assumption that $H (x,x) =0$, we have
    \[
    [G\Delta H](x,x) = - \sum_{z\in \V} \pi(z) H(z,x). \qedhere
    \]
\end{proof}

\begin{prop}
[{\cite[Chapter 2, Lemma 12]{aldous-fill-2014}}]  Let $H(\cdot,\cdot)$ be the expected hitting time function defined in (\ref{eq:exphittime}).
  \rwassumptions 
  Then for all $x, y \in \V$,
  \begin{equation*}
    H(x,y) = \G (y,y) - \G(x,y).
  \end{equation*}
  \label{prop:hittinggreen}
\end{prop}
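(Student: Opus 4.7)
The plan is to fix $y \in \V$ and study the function $h_y: x \mapsto H(x,y)$. I will compute $\Delta h_y$ explicitly, invert using Lemma \ref{lem:GDinv}, and pin down the resulting additive constant from the boundary value $h_y(y) = 0$.

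A first-step analysis, conditioning on $X_1$, gives for $x \neq y$ the identity $h_y(x) = 1 + \sum_z K(x,z) h_y(z)$; hence $\Delta h_y(x) = 1$ on $\V \setminus \{y\}$. Together with $h_y(y)=0$, this forces $\Delta h_y(x) = 1 - c\, I(x,y)$ for some scalar $c$ capturing the value of $\Delta h_y$ at $y$. Because $\pi$ is $K$-invariant, one has $\sum_x \pi(x) \Delta h_y(x) = \pi(h_y) - \pi(K h_y) = 0$, which forces $c = 1/\pi(y)$. Thus
\[
\Delta h_y(x) = 1 - \frac{1}{\pi(y)} I(x,y).
\]

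Next I apply $G$ to both sides. Using $\sum_z G(x,z) = 0$ and $G(x,y)/\pi(y) = \G(x,y)$, the right-hand side collapses to $-\G(x,y)$, so $[G\Delta h_y](x) = -\G(x,y)$. On the other hand, Lemma \ref{lem:GDinv} yields $[G\Delta](x,z) = I(x,z) - \pi(z)$, whence $[G\Delta h_y](x) = h_y(x) - \sum_z \pi(z) h_y(z)$. Equating the two expressions produces $h_y(x) = \sum_z \pi(z) h_y(z) - \G(x,y)$.

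To conclude, setting $x=y$ and using $h_y(y)=0$ identifies the additive constant as $\sum_z \pi(z) h_y(z) = \G(y,y)$, and substituting back delivers $H(x,y) = \G(y,y) - \G(x,y)$. I anticipate no real obstacle; each step is a direct algebraic manipulation. The only subtle point is the determination of $\Delta h_y$ at the single vertex $y$, which crucially uses $K$-invariance of $\pi$. As an alternative, one could invoke Lemma \ref{lem:diagonal} directly on the hitting-time kernel (which does satisfy $H(x,x)=0$) to recover the diagonal identity, but the route above has the advantage of producing the full two-variable formula in a single stroke.
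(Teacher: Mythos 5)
Your proof is correct, and while it follows the same broad strategy as the paper --- compute $\Delta H$ explicitly, then invert with the Green operator via Lemma~\ref{lemGD} --- you handle the two places where the argument could get delicate more cleanly. First, the paper determines the diagonal value $[\Delta H](y,y) = 1 - 1/\pi(y)$ by identifying $[KH](y,y) = \E_y[\tau_y^+]$ and invoking Kac's formula $\E_y[\tau_y^+] = 1/\pi(y)$ (cited from Durrett), whereas you deduce the same value purely algebraically from $\pi$-invariance, $\sum_x \pi(x)\,\Delta h_y(x) = 0$; this removes a nontrivial external input. Second, the paper routes through Lemma~\ref{lem:diagonal} to first prove the diagonal identity $\G(x,x) = \sum_z \pi(z)H(z,x)$ and then assembles the two-variable formula, while you work directly with the single-variable function $h_y$ and extract the additive constant in one stroke from the boundary condition $h_y(y) = 0$. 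Both approaches are sound, but yours is a shade leaner and self-contained; the paper's version has the virtue of also exhibiting the mean-hitting-time identity $\G(y,y) = \sum_z \pi(z)H(z,y)$ as a standalone intermediate fact.
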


\begin{proof}
  First, we want to compute $KH(a,b)$ for all $a, b \in \V$. On the diagonal, we have
  \begin{align*}
    [KH](a,a) = \sum_{z \in \V} K(a,z) H(z,a) = \E_a[\tau^+_a] = \frac{1}{\pi(a)},
  \end{align*}
  where $\tau^+_a = \min\{t \geq 1 : X_t = a\}$.
  The last equality follows from $K$ being irreducible, see \cite[Theorem 5.5.11]{durrettProbabilityTheoryExamples2010}.
  When $a \neq b$, we have $X_0 = a$ and $X_1$ is distributed according to $\charf_aK$, and thus, the following  relation holds:
  \begin{align*}
  H(a,b) &= \E[\tau_b | X_0 = a, X_1 = b] + \sum_{z: z \neq b} \E[\tau_b | X_0 = a, X_1 = z] \\
  &= K(a,b) + \sum_{z: z \neq b} K(a,z) (1 + \E[\tau_b| X_0 = z]) \\
  &= \sum_{z \in \V}K(a,z) + \sum_{z: z\neq b} K(a,z) H(z,b) \\
  &= 1 + [KH](a,b).
  \end{align*}
  Then for all $x,y \in \V$,
  \begin{equation}
    [\Delta H] (x,y) = \begin{cases}
  1- \frac{1}{\pi(x)} &\text{if } x = y \\
  1 & \text{if } x \neq y.
  \end{cases}
  \label{eq:DH}
  \end{equation}
  Applying $G$ to both sides and using $\sum_yG(x,y)=0$, we have
  \begin{align*}
    [G \Delta H] (x,x) &= \sum_{y \in \V} G(x,y) [\Delta H](y,x) = - \frac{1}{\pi(x)} G(x,x).  \end{align*}
  Note that the hitting time (\ref{eq:exphittime}) from a point to itself is identically zero. 
  Combining this with Lemma \ref{lem:diagonal}, we have
  \begin{equation*}
    \G(x,x) = \frac{G(x,x)}{\pi(x)} = \sum_{y \in \V}  \pi(y)H(y,x).
  \end{equation*}
  Then for all $x,y$, we can apply $G$ once again to (\ref{eq:DH}) to get
  \begin{equation*}
    [G \Delta H](x,y) = \sum_{z \in \V} G(x,z) [\Delta H](z,y) = - \frac{G(x,y)}{\pi(y)} .  \end{equation*}
  Combining this with Lemma \ref{lemGD}, we get
  \begin{equation*}
    H(x,y) = [G \Delta H](x,y) + \sum_{z \in \V} \pi(z) H(z,y) = \frac{G(y,y)}{\pi(x)} - \frac{G(x,y)}{\pi(y)} = \G(y,y) - \G(x,y). \qedhere
  \end{equation*}
  
\end{proof}
We note that these properties are all essentially well known although they are often presented in slightly different ways.

\subsection{Remarks on periodicity and laziness}
Although we have assumed aperiodicity in addition to irreducibility, the aperiodicity assumption is not essential for our purpose. Indeed, the definition of the Green function, $\G(x,y)=\sum_{n=0}^\infty (k^n(x,y)-1)$ makes sense for irreducible periodic chain as well as long as on understand it in the form
$$\G(x,y)=\sum_{n=0}^\infty (k^{2n}(x,y)+k^{2n+1}(x,y)-2).$$
To see that this series converges, we use the spectral decomposition of  the reversible irreducible  kernel $K$ with eigenvalues $\beta_i\in [-1,1]$, $0\le i\le |\V|-1$, arranged in non-increasing order, and associated normalized real eigenfunctions $\phi_i$. Because $K$ is irreducible, $\beta_0=1$ and $\beta_2<1$. The chain is periodic if and only if $\beta_{|\V|-1}=-1$. This gives
$$k^{2n}(x,y)+k^{2n+1}(x,y)-2=\sum_{i=1}^{|\V|-1}(1+\beta_i)\beta_i^{2n}\phi_i(x)\phi_i(y).$$
This term decays exponentially fast because all eigenvalues equal to $-1$ drop from this sum thanks to the factor $(1+\beta_i)$.  

Define$$I_\pm(x,y)=\left\{\begin{array}{l}+1 \mbox{ if $x$ and $y$ are in the same class},\\-1
\mbox{ if $x$ and $y$ are not in the same class}.\end{array}\right.$$
The statement of Lemma \ref{lemGD} need to be adjusted to
\begin{eqnarray}GK(x,y)&=&I(x,y)-\lim_{n\to \infty}K^{2n+2}(x,y)\nonumber 
\\
&=&I(x,y)- \left\{\begin{array}{l}\pi(y) \mbox{ if $K$ is aperiodic},\\2\pi(y) \mbox{ if $K$ is periodic and $x,y$ are in the same class},\\
 0\mbox{ if $K$ is periodic and $x,y$ are not in the same class}.\end{array}\right. \label{lemGD*}\end{eqnarray}
This gives the correct result whether or not $K$ is aperiodic. In the periodic case, letting $\mathfrak C(x)$ denote the class of $x$, the identity in Lemma (\ref{lem:diagonal}) for an arbitrary kernel $H$ satisfying $H(x,x)=0$ becomes 
\begin{equation}
[G\Delta H](x,x)=- 2\sum_{z\in \mathfrak C(x)}\pi(z)H(z,x). \label{lem:diagonal*}
\end{equation}
We need to observe that for $y\not\in\mathfrak C(x)$, $H(y,x)=1+\sum_{z\in \mathfrak C(x)}K(y,z)H(z,x)$. 
Summing over $y\not\in \mathfrak C(x)$ and remembering that $\pi(\mathfrak C(x))=1/2$, we obtain
$$\sum_{y\not\in\mathfrak C(x)}\pi(y)H(y,x)= \frac{1}{2} +\sum_{z\in\mathfrak C(x)}\pi(z)H(z,x).$$

From there, one checks that the identity of Proposition \ref{prop:hittinggreen} concerning the expected hitting time $H$ and stating that
\begin{equation}
 H(x,y)=\left\{\begin{array}{ll}\G(y,y)-\G(x,y) 
 &\mbox{ if  $x,y$ are in the same class},\\
 1+\G(y,y)-\G(x,y) 
 &\mbox{ if  $x,y$ are not in the same class}, \end{array}\right.
\label{eq:periodic-hittime}
\end{equation}
continues to hold with essentially the same proof, adjusting for (\ref{lemGD*})-(\ref{lem:diagonal*}).

When computing or estimating the expected hitting times $H(a,b)$ for an aperiodic reversible chain $K$, there is no loss of generality in assuming that the chain is lazy. Indeed,  fix $\epsilon\in (0,1)$ and define
$K_\epsilon=\epsilon I +(1-\epsilon)K$.  Using the spectral decomposition again,
$$\G_\epsilon(x,y)=\sum_{n=0}^{+\infty} \sum_{i=1}^{|\V|-1}\beta_i(\epsilon)^{n}\phi_i(x)\phi_i(y)=\sum_{i=1}^{|\V|-1}\frac{1}{1-\beta_i(\epsilon)}\phi_i(x)\phi_i(y).$$
But $\beta_i(\epsilon)=\epsilon+ (1-\epsilon)\beta_i$ so that  $1-\beta_i(\epsilon)=(1-\epsilon)(1-\beta_i)$ and 
$$\G_\epsilon(x,y)=(1-\epsilon)^{-1}\G(x,y).$$
It follows that $H_\epsilon(x,y)=(1-\epsilon)^{-1}H(x,y)$ (the $\epsilon$-lazy chain behaves as if slowdown by a factor of $(1-\epsilon)^{-1}$.  The periodic case is a bit more subtle.

Recall that for a reversible irreducible periodic chain, their is two periodic class $A,A^c$ with $\pi(A)=\pi(A^c)=1/2$, and $-1$ is an eigenvalue of multiplicity $1$ with eigenfunction $\mathbf 1_A-\mathbf 1_{A^c}$.  Of course $K^{2n+1}(a,b)=0$ if $a,b$ are in the same class and $K^{2n}(a,b)=0$ if $a,b$ are not in the same class.  In this case, we find that
$$\G_\epsilon(x,y)= (1-\epsilon)^{-1}\left(\G(x,y)+ \frac{1}{2}(\mathbf 1_A(x)-\mathbf 1_{A^c}  (x))(\mathbf 1_A(y)-\mathbf 1_{A^c}(y)\right)=(1-\epsilon)^{-1}\left(\G(x,y)+ \frac{I_{\pm}(x,y)}{2}\right)$$ 
For the expected hitting time $H_\epsilon(a,b)$ with (\ref{eq:periodic-hittime}), this gives
$$H_\epsilon (a,b)= (1-\epsilon)^{-1}H(a,b).$$
Alternatively, this last result can be seen directly has follows. Consider the product probability space of $\V^\infty$ equipped the probability $\P_a$ induced by the Markov kernel $K$ (we can fix the starting point $a$) and $\{0,1\}^\infty$ with the product of Bernoulli measures with parameter $1-\epsilon$.
For $\omega=(\theta,\sigma)\in  \V^\infty\times \{0,1\}^\infty$, set $X_k(\omega)=\theta_k$ for all $k\geq 0$. 
$N_k (\sigma) = \argmin_{j} 
\{ \sum_{i=0}^j \sigma_i = k  \} = n.$

Under $\P_a$, the random variable $X_k$ is distributed according to $K^k(a,\cdot)$
and $(X_k)_0^\infty$ is a realization of the $K$-Markov chain. 
Similarly,
$Y_n$ is distributed according to $K_\epsilon^n(a,\cdot)$ and $(Y_n)_0^\infty$ is a realization of the
$K_\epsilon$-Markov chain.  
Consider 
$\tau_b(\omega)=\inf\{k: \theta_k=b\}$ and $\tau^*_b(\omega)=\inf_k\{N_k(\sigma):\theta_{N_k(\sigma)}=b\}$.
By construction, on $\{\tau_b=k\}$, $\tau_b^*=N_k$ and (using the known first moment of a negative-binomial)
$$
H_\epsilon(a,b)
=\E_a(\tau_b^*)
=\sum_{k\ge 0} \sum_{n\ge k} n \P(N_k=n) \P_a(\tau_b=k)
=(1-\epsilon)^{-1}\sum_k k \P_a(\tau_b=k)
=(1-\epsilon)^{-1}H(a,b).
$$
This shows that, as far as  $H(a,b)$ is concerned, we may just as well study the lazy version of the chain of interest. 

\subsection{Heat kernel estimates}
In this section, we recall criteria for $(\Gamma, K, \pi)$ that imply the existence of good estimates for $k^n(x,y)$, in the sense of Theorem 
\ref{thm:ds-gaussian}. 
We refer the reader to \cite{barlowRandomWalksHeat2017} for detailed exposition on the topic.
The majority of our results assume that $\syst$ satisfies the conditions
$(\El), (\PI(\theta)), (\VD),$ and $(\CS(\theta))$ for some $\theta \geq 2$. These (well-known) conditions are described below. 
The (sometimes unspecified) constants in our results will depend on $\theta$ and the constants appearing in all these conditions, but they are independent of other all other parameters such as the size of $\V$ and its diameter.  

\begin{defn}
  The Markov kernel $\syst$ satisfies \emph{ellipticity} (E) if there exists a constant $p_0$ such that for all $x \sim y$,
  \begin{equation}
    \tag{E}
    K(x,y) \geq p_0.
    \label{eq:E}
  \end{equation}
\end{defn}

\begin{defn}
  The Markov kernel $(\Gamma, K, \pi)$ satisfies \emph{volume doubling} (VD) if
  there exists a constant $C_D > 0$ such that for all $x \in \V$ and $r > 0$,
  \begin{equation}
    V(x, 2r) \leq C_D V(x, r).
    \tag{VD}
    \label{eq:VD}
  \end{equation}
\end{defn}

\begin{defn}
  The Markov kernel $(\Gamma, K, \pi)$ satisfies \emph{the Poincar\'e inequality} (PI$(\theta)$) if there is a constant $C_P>0$ such that, for all $x \in \V$ and all $r>0$,
  the following statement is true:
  \begin{equation}
  \forall f, \quad \sum_{z \in B}\left|f(z)-f_B\right|^2 \pi(z)
  \leq C_P r^\theta \sum_{\xi, \zeta \in B, \{\xi, \zeta\} \in E}|f(\xi)-f(\zeta)|^2 K(\xi,\zeta) \pi(\xi) \text {, }
  \tag{PI}
  \label{eq:PI}
  \end{equation}
  where $B = B(x,r)$ and $f_B=\pi(B)^{-1} \sum_B f \pi$.
\end{defn}

\begin{defn}
  \label{def:CS}
  Fix $\theta \in[2, \infty)$. The Markov kernel $(\Gamma, K, \pi)$ satisfies \emph{the cut-off function existence property} $(\operatorname{CS}(\theta))$ if there are constants $C_1, C_2, C_3$ and $\epsilon>0$ such that, for any $x \in \V$ and $r>0$, there exists a cut-off function $\sigma=\sigma_{x, r}$ satisfying the following properties:
  \begin{enumerate}[label=(\alph*)]
    \item $\sigma \geq 1$ on $B(x, r / 2)$
    \item $\sigma \equiv 0$ on $\V \backslash B(x, r)$
    \item For all $y, z \in \V,|\sigma(z)-\sigma(y)| \leq C_1(d(z, y) / r)^\epsilon$
    \item For any $s \in(0, r]$ and any function $f$ on $B(x, 2 r)$,
    $$
    \begin{aligned}
      & \sum_{z \in B(x, s)}|f(z)|^2 \sum_{y:\{z, y\} \in E}|\sigma(z)-\sigma(y)|^2 K(z,y) \pi(z) \\
      & \quad \leq C_2(s / r)^{2 \epsilon}\left\{\sum_{\substack{z, y \in B(x, 2 s) \\
      \{z,y\} \in E}}|f(z)-f(y)|^2 K(z,y) \pi (z) +s^{-\theta} \sum_{z \in B(x, 2 s)}|f(z)|^2 \pi(z)\right\} .
    \end{aligned}
    $$
  \end{enumerate}
 \end{defn}
 Note that when $\theta=2$, $(\CS(2))$ is always automatically satisfied.  For other values of $\theta$, it is typically extremely difficult to verify this condition based on a description of the graph.

\begin{thm}\cite{barlowStabilityParabolicHarnack2004}
  Suppose that the Markov kernel $\syst$ satisfies \assumptions{}.
  Then there exists constants $c_1, c_2, C_1, C_2>0$ such that for all $x,y \in \V$
  $$
  k^n(x, y) \leq \frac{C_1}{V\left(x, n^{1 / \theta}\right)} \exp \left(-C_2 \left(\frac{d(x, y)^\theta}{n}\right)^{1 /(\theta-1)}\right),
  $$
  and, for all $d(x,y) \leq n$,
  $$
  \frac{c_1}{V\left(x, n^{1 / \theta}\right)} \exp \left(-c_2 \left(\frac{d(x, y)^\theta}{n}\right)^{1 /(\theta-1)}\right) \leq
  k^n(x, y) + k^{n+1}(x,y).
  $$
  \label{thm:ds-gaussian}
\end{thm}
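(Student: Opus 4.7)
This is the Barlow-Bass stability theorem for sub-Gaussian heat kernel estimates with walk dimension $\theta$, and its proof is long; here is how I would organize the attack.

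The plan for the upper bound is to first combine $(\VD)$, $(\PI(\theta))$, and $(\CS(\theta))$ to derive localized Faber-Krahn or Nash-type inequalities at every scale $r$, the natural time scale being $r^\theta$. Feeding these into the standard Nash/Moser machinery applied to the discrete semigroup $K^n$ gives the on-diagonal upper bound $k^n(x,x) \leq C/V(x, n^{1/\theta})$. For the sub-Gaussian tail, I would invoke the Davies perturbation method: conjugate the kernel by $e^{\psi}$ for a well-chosen weight $\psi$, control the perturbed Dirichlet form using $(\CS(\theta))$ to absorb the additional commutator-type error, and then optimize over a parameter $\lambda$ with $\psi(y) - \psi(x) = \lambda d(x,y)$. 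The exponent $1/(\theta-1)$ emerges from balancing the distance penalty $\lambda d$ against the accumulated time penalty $n\lambda^{\theta/(\theta-1)}$, i.e., from minimizing $\lambda d - c n \lambda^{\theta/(\theta-1)}$ in $\lambda$.

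For the lower bound, I would first establish the parabolic Harnack inequality (PHI) at scale $\theta$: $(\El)+(\VD)+(\PI(\theta))+(\CS(\theta))$ together produce PHI on space-time cylinders of dimensions $r \times r^\theta$ via parabolic Moser iteration. Once PHI is available, the near-diagonal lower bound $k^n(x,y) + k^{n+1}(x,y) \geq c/V(x,n^{1/\theta})$ for $d(x,y) \leq \eta n^{1/\theta}$ follows by combining PHI with mass conservation $\sum_y K^n(x,y) = 1$, restricted to a ball which retains a uniform fraction of the walk's mass up to time $n$. Then I chain: choose $N \sim (d(x,y)^\theta/n)^{1/(\theta-1)}$, partition $[0,n]$ into $N$ pieces, place intermediate balls of radius comparable to $(n/N)^{1/\theta}$ along a geodesic from $x$ to $y$, and apply PHI $N$ times to transfer the near-diagonal bound across each segment. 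The resulting product accumulates to $\exp(-c N)$, which matches the stated sub-Gaussian decay.

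The hardest step, in my view, is managing the Davies perturbation when $\theta > 2$. In the diffusive case $\theta=2$ one can choose $\psi$ linear in $d$ and the perturbed generator is easily controlled through a first-order commutator estimate; but for general $\theta$ the correct substitute is precisely $(\CS(\theta))$, and extracting the sharp exponent $1/(\theta-1)$ requires coupling $(\CS(\theta))$ to a logarithmic-Sobolev-type iteration at each scale, which is the real technical core of the Barlow-Bass argument. The parity issue in the lower bound is sidestepped by the $k^n + k^{n+1}$ formulation, and ellipticity $(\El)$ guarantees that the single-step transitions needed at the bottom of each chaining step are non-degenerate.
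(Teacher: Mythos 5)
The paper states Theorem~\ref{thm:ds-gaussian} purely as a citation to \cite{barlowStabilityParabolicHarnack2004} and offers no in-text proof, so strictly speaking there is nothing local to compare your sketch against. That said, your overall organization does track the literature: $(\VD)+(\PI(\theta))+(\CS(\theta))$ yield localized Faber--Krahn/Nash inequalities at scale $(r, r^\theta)$, parabolic Moser iteration produces the parabolic Harnack inequality, the near-diagonal lower bound follows from PHI plus conservation of mass with the $k^n+k^{n+1}$ device handling parity, and the chaining argument with $N \sim (d(x,y)^\theta/n)^{1/(\theta-1)}$ steps along a geodesic does accumulate to exactly the stated sub-Gaussian decay.

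There is, however, a genuine problem with the off-diagonal upper bound. First, the optimization you wrote down is inconsistent: to recover $(d^\theta/n)^{1/(\theta-1)}$ from $\sup_\lambda\{\lambda d - n\Phi(\lambda)\}$ you need $\Phi(\lambda)\propto\lambda^\theta$ (the Legendre dual of $u\mapsto u^{\theta/(\theta-1)}$), not the $n\lambda^{\theta/(\theta-1)}$ you proposed. Second, and more fundamentally, the classical Davies perturbation cannot produce a $\lambda^\theta$ time penalty when $\theta>2$. Conjugating a nearest-neighbor discrete kernel by $e^{\psi}$ with $|\nabla\psi|\lesssim\lambda$ gives a per-step error of order $\cosh\lambda - 1 \sim \lambda^2$, so the accumulated cost is always $n\lambda^2$ and the optimized exponent is always Gaussian, $d^2/n$. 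This mismatch is precisely why the sub-Gaussian regime requires different machinery. The Barlow--Bass--Kumagai route for the off-diagonal upper bound instead uses $(\CS(\theta))$ to control truncated (cutoff-weighted) Dirichlet forms and to prove exit-time estimates $\E_x[\tau_{B(x,r)}]\asymp r^\theta$, and then converts on-diagonal decay plus exit-time control into the full sub-Gaussian upper bound via a Grigor'yan--Telcs-type iteration or an equivalent chaining/self-improvement argument. So $(\CS(\theta))$ is indeed the crucial ingredient, as you say, but its role is to feed exit-time and truncated-energy bounds, not to repair a commutator estimate inside the Davies conjugation.
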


The following lemma gives the exponential decay of $|k^n - 1|$ when $n$ is of order greater than $D^\theta$. 
The proof uses a standard interpolation argument, previously used in the proofs of
\cite[Lemma 1.1]{diaconisNashInequalitiesFinite1996} and
\cite[Theorem 6.4]{diaconisAnalyticgeometricMethodsFinite2020}.

\begin{lem}
  Suppose that the Markov kernel $\syst$ satisfies \assumptions{} and is lazy.
  Then, there exists $C_2$ and, for any $c_0>0$, there exists $C_1$ such that,
  for all $n \geq c_0 D^\theta$  and $x,y \in \V$,
  \begin{equation*}
    |k^n(x,y) - 1| \leq C_1 \exp(- C_2 n/ D^\theta),
  \end{equation*}
  ($C_1$ depends on $c_0$, but not $C_2$). 
  \label{lem:spectralgap}
\end{lem}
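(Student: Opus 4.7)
The plan is standard spectral theory combined with an interpolation argument. Because $K$ is lazy and reversible, write $K$ in its spectral decomposition: let $\{\phi_i\}_{i=0}^{|\V|-1}$ be an $L^2(\pi)$-orthonormal basis of real eigenfunctions of $K$ with eigenvalues $1=\beta_0>\beta_1\geq\cdots\geq\beta_{|\V|-1}\geq 0$ (nonnegativity comes from laziness). Then
\[
k^n(x,y)-1=\sum_{i\geq 1}\beta_i^n\phi_i(x)\phi_i(y),
\]
and since all $\beta_i$ are nonnegative, Cauchy--Schwarz on this sum gives
\[
|k^n(x,y)-1|^2\leq (k^n(x,x)-1)(k^n(y,y)-1).
\]
So it suffices to obtain an exponential bound on the diagonal quantity $k^n(x,x)-1$.

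The first input is a spectral gap bound. Since $\V=B(x,D)$, the Poincar\'e inequality (\ref{eq:PI}) at scale $r=D$ applied to the whole graph yields $1-\beta_1\geq c/D^\theta$ for a constant $c$ depending only on $C_P$, and in particular $\beta_1^m\leq e^{-cm/D^\theta}$ for all $m\geq 0$. The second input is an on-diagonal heat kernel bound at time $n_0:=\lceil (c_0/2)D^\theta\rceil$ supplied by Theorem~\ref{thm:ds-gaussian}: $k^{n_0}(x,x)\leq C/V(x,n_0^{1/\theta})$. Because $\pi(\V)=V(x,D)=1$ and (\ref{eq:VD}) holds, $V(x,n_0^{1/\theta})$ is bounded below by a constant depending on $c_0,\theta,C_D$, so that $k^{n_0}(x,x)-1\leq C'(c_0)$ uniformly in $x$.

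Now interpolate: for $n\geq c_0 D^\theta\geq 2n_0$, factor out $\beta_1^{n-n_0}$ term by term, using that $\beta_i\in[0,\beta_1]$ for $i\geq 1$:
\[
k^n(x,x)-1=\sum_{i\geq 1}\beta_i^{n-n_0}\beta_i^{n_0}\phi_i(x)^2
\leq \beta_1^{n-n_0}\sum_{i\geq 1}\beta_i^{n_0}\phi_i(x)^2=\beta_1^{n-n_0}(k^{n_0}(x,x)-1).
\]
Combining the Cauchy--Schwarz bound with this and the analogous estimate at $y$, then taking square roots,
\[
|k^n(x,y)-1|\leq C'(c_0)\,\beta_1^{n-n_0}\leq C'(c_0)\exp\bigl(-c(n-n_0)/D^\theta\bigr).
\]
Since $n-n_0\geq n/2$, this yields the claim with $C_2=c/2$ (independent of $c_0$) and $C_1=C_1(c_0)$ absorbing both $C'(c_0)$ and the $e^{cn_0/D^\theta}$ factor.

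There is no genuine obstacle here; the only point requiring a little care is ensuring that the exponential rate $C_2$ truly depends only on the Harnack parameters and not on $c_0$, which forces $n_0$ to be chosen proportional to $c_0 D^\theta$ and the excess volume-doubling factors $V(x,D)/V(x,n_0^{1/\theta})$ to be pushed into $C_1$. In the non-lazy setting this same argument would proceed after passing to $k^{2n}(x,x)$ in order to kill eigenvalues near $-1$, which is why laziness is convenient but not essential.
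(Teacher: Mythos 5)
Your proof is correct and is essentially the same argument as the paper's: both combine the on-diagonal bound from Theorem~\ref{thm:ds-gaussian} at time of order $D^\theta$ with the Poincar\'e-derived spectral gap $1-\beta_1\gtrsim D^{-\theta}$, and interpolate. You phrase the interpolation directly through the spectral decomposition (using laziness for $\beta_i\geq 0$ and Cauchy--Schwarz to reduce to the diagonal), whereas the paper packages the identical reduction as the operator-norm factorization $\|K^n-\pi\|_{1\to\infty}\leq\|K^{n_2}-\pi\|_{1\to 2}\|K^{n_1}-\pi\|_{2\to 2}\|K^{n_2}-\pi\|_{2\to\infty}$; these are two presentations of the same estimate.
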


\begin{proof}
  This proof relies on the language of operator norms. 
  For a more thorough explanation about the analysis of reversible finite Markov chains, we refer the reader to \cite{saloff-costeLecturesFiniteMarkov1997}.
  Recall that the space $\ell^p(\pi)$ is the set of functions from $\V$ to $\R$ under the norm
  $||f||_p = \left(\sum_{x \in \V} |f(x)|^p \pi(x) \right)^{1/p}$ if $p \geq 1$ and
  $||f||_\infty = \sup_{x\in \V} |f(x)|.$
  Given $p,q \in [1,\infty]$ and $K: \ell^p(\pi) \to \ell^q(\pi)$, define
  $$||K||_{p \to q} = \sup_{f \in \ell^p(\pi)} \left \{ \frac {||Kf||_q}{||f||_p} \right \}.$$
  Given this notation, we set $n_1, n_2$ such that $n = n_1 + 2 n_2$ and have
  \begin{align*}
    |k^n(x,y) - 1| &\leq ||K^n-\pi||_{1 \to \infty} \\
    &\leq ||K^{n_2}-\pi||_{1 \to 2} ||K^{n_1}-\pi||_{2 \to 2} ||K^{n_2}-\pi||_{2 \to \infty}.
  \end{align*}
  By reversibility of $K$, we know that 
  \begin{equation*}
  ||K^{n_2}-\pi||_{1 \to 2} = ||K^{n_2}-\pi||_{2 \to \infty}
  = \sqrt{\sup_{x,y \in \V} k^{2n_2}(x,y)}
  \leq \sqrt{\sup_{x \in \V} k^{2n_2}(x,x)}. 
  \end{equation*}
  By Theorem \ref{thm:ds-gaussian}, there exists $C_1, C_2 >0$ such that 
  \begin{align*}
  k^{2 n_2}(x,y) \leq \frac{C_1}{V\left(x, (2n_2)^{1 / \theta}\right)} \exp \left(-C_2 \left(\frac{d(x,y)^\theta}{2 n_2}\right)^{1 /(\theta-1)}\right).
  \end{align*}
  By choosing $n_2$ to be the largest integer less than $c_0 D^\theta$, the exponential term is bounded by a constant depending on $c_0$. 
  The volume doubling property ($\VD$) implies
  \begin{align*}
      \frac{1}{V\left(x, (2n_2)^{1 / \theta}\right)} 
      \leq \frac{V(x,D)}{V\left(x, (2n_2)^{1 / \theta}\right)} 
      \leq C_3 \left( \frac{D}{n_2^{1/\theta}} \right)^{C_4},
  \end{align*}
  for some $C_3, C_4 > 0$. 
  Thus, we have
  \begin{equation}
  ||K^{n_2}-\pi||_{1 \to 2} ||K^{n_2}-\pi||_{2 \to \infty}
  \leq C_5 \left( \frac{D}{n_2^{1/\theta}} \right)^{C_4}
  \label{eq:spectral1}.
  \end{equation}
  By ($\El$) and ($\PI(\theta)$) and the assumed laziness of $K$, there exist  $C_6,C_7 > 0$ such that 
  \[
  ||K^{n_1}-\pi||_{2 \to 2} = (1 -  C_6/D^\theta)^{n_1} \leq
  \exp( - C_7 n_1/D^\theta),
  \]
  Combined with (\ref{eq:spectral1}), there exists a constant $C_8 >0$, such that
  \[
    |k^n(x,y) - 1| \leq  C_5 \left( \frac{D}{n_2^{1/\theta}} \right)^{C_4}
    \exp( - C_7 n_1/D^\theta) \leq \exp( - C_8 n/D^\theta). 
  \]
\end{proof}

\section{Green's function estimates}
 \label{sec-Green}
\begin{prop}
  Suppose that the Markov kernel $\syst$ is lazy and satisfies \assumptions{}.
  Then there exists a constant $C > 0$ such that for all $x,y \in \V$,
  \begin{equation}
    |\G(x,y)| \leq \sum_{d(x,y)^\theta \leq n \leq 2D^\theta}
    \frac{C}{V(x,n^{1/\theta})}.
    \label{eq:greenupper}
  \end{equation}
  \label{prop:greenupper}
\end{prop}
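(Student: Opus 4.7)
The plan is to decompose
$\G(x,y) = \sum_{n=0}^\infty (k^n(x,y)-1)$
at two natural thresholds, $n = d(x,y)^\theta$ (the diffusive time scale associated with displacement $d(x,y)$) and $n = 2D^\theta$ (beyond which the chain is essentially mixed), and bound the three resulting pieces separately against the right-hand side of \eqref{eq:greenupper}.

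On the middle range $d(x,y)^\theta \leq n \leq 2D^\theta$, the stretched-exponential factor in the upper Gaussian bound of Theorem \ref{thm:ds-gaussian} is bounded below by a positive constant, so $k^n(x,y) \leq C_1/V(x,n^{1/\theta})$; combined with the trivial inequality $1 \leq 1/V(x,n^{1/\theta})$ (since $\pi$ is a probability measure), this yields $|k^n(x,y)-1| \leq C_2/V(x,n^{1/\theta})$, and summing reproduces the full target sum up to a multiplicative constant. For the tail $n > 2D^\theta$, Lemma \ref{lem:spectralgap} gives $|k^n(x,y)-1| \leq C_3 e^{-C_4 n/D^\theta}$, whose geometric-series sum is of order $D^\theta$; but $V(x,n^{1/\theta}) = 1$ as soon as $n \geq D^\theta$ (the ball then equals $\V$), so the target right-hand side already contains on the order of $D^\theta$ summands of constant size on $n \in [D^\theta,2D^\theta]$, which absorbs this tail.

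The delicate range is the pre-diagonal one, $0 \leq n < d(x,y)^\theta$, where the constant $-1$ contributions alone sum to $d(x,y)^\theta$, so a termwise triangle inequality is too crude. Applying the triangle inequality at the outer level, this piece is at most $d(x,y)^\theta + \sum_{n=d(x,y)}^{d(x,y)^\theta - 1} k^n(x,y)$. The first summand is $\leq D^\theta$ and is absorbed exactly as in the tail step. For the second, I would use the full off-diagonal Gaussian estimate of Theorem \ref{thm:ds-gaussian} together with (\VD) in the form $1/V(x,n^{1/\theta}) \leq C(d(x,y)^\theta/n)^{\alpha/\theta}/V(x,d(x,y))$, where $\alpha = \log_2 C_D$. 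Setting $u = d(x,y)^\theta/n$, the sum is bounded by $(d(x,y)^\theta/V(x,d(x,y)))\int_1^\infty u^{\alpha/\theta - 2} e^{-C u^{1/(\theta-1)}}\,du$, a convergent integral whose value is a constant. By (\VD) once more, $d(x,y)^\theta/V(x,d(x,y))$ is comparable to the subsum of the target over $n \in [d(x,y)^\theta,2d(x,y)^\theta]$.

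The main obstacle is this final estimate: the stretched-exponential factor in the off-diagonal Gaussian bound must dominate the polynomial blow-up of $1/V(x,n^{1/\theta})$ produced by (\VD), in order to extract the correct volume factor $V(x,d(x,y))$. Once this is in hand, adding the three bounds yields the required inequality \eqref{eq:greenupper}.
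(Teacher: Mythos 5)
Your proposal is correct and follows essentially the same three-part decomposition as the paper's proof: control the middle range $d(x,y)^\theta \le n \le 2D^\theta$ by the on-diagonal Gaussian upper bound (plus the trivial bound $1 \le 1/V(x,n^{1/\theta})$), absorb the tail $n > 2D^\theta$ of order $D^\theta$ via Lemma~\ref{lem:spectralgap} into the $\approx D^\theta$ unit-sized summands at the top of the target sum, and for the pre-diagonal range isolate the $-1$ contributions and then show the stretched-exponential factor beats the volume-doubling polynomial so that $\sum_{d(x,y)\le n < d(x,y)^\theta} k^n(x,y) \lesssim d(x,y)^\theta/V(x,d(x,y))$, which by $(\VD)$ is comparable to the initial block of the target sum. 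The only cosmetic difference is that you control the pre-diagonal block by a change of variables $u = d(x,y)^\theta/n$ and a convergent integral, whereas the paper packages exactly that domination into the pointwise inequality of Lemma~\ref{lem:appen1} and then simply multiplies by the number of terms; the pointwise route avoids having to justify a sum-to-integral comparison for a non-monotone integrand, but the underlying estimate is the same.
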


\begin{proof}
  We use $k^n(x,y)+1$ to upper bound the summand of (\ref{eq:green}) for $0 \leq n \leq 2D^\theta$ and break up the sum of as follows:
  \begin{equation}
    |\G(x,y)| \leq 2D^\theta
    + \sum_{0 \leq n < d(x,y)^\theta} k^n(x,y)
    + \sum_{d(x,y)^\theta \leq n \leq 2D^\theta} k^n(x,y)
    + \sum_{n > D^\theta } |k^n(x,y)-1|.
    \label{eq:greenupper1}
  \end{equation}
  Now we will show that each of the four summands above can be bounded above by a constant multiple of the sum in (\ref{eq:greenupper}).
  First note that for all $D^\theta \leq n \leq 2D^\theta$, we have $V(x,n^{1/\theta}) = 1$. Thus, we have for the first summand 
  \begin{equation*}
  D^\theta \leq \volsum{1}.
  \end{equation*}  
  
  By Theorem \ref{thm:ds-gaussian}, we know that there exists $C_1, C_2 > 0$ such that $k^n(x,y) \leq u(n) v(n)$ for all $n \geq d(x,y)$, where
  \begin{equation*}
    u(t) = \frac{C_1}{V\left(x, t^{1 / \theta}\right)} \and v(t) = \exp \left(-C_2 \left(\frac{d(x, y)^\theta}{t}\right)^{1 /(\theta-1)}\right).
  \end{equation*}
  For the second summand of (\ref{eq:greenupper1}), we use Lemma \ref{lem:appen1} and the fact that $k^{n}(x,y) = 0$ for $n < d(x,y)$ to get
  \begin{align*}
  \sum_{0 \leq n < d(x,y)^\theta} k^n(x,y)
  \leq \sum_{d(x,y) \leq n < d(x,y)^\theta} u(n) v(n) \leq \frac{C_3 d(x,y)^\theta}{V(x, d(x,y))}.
  \end{align*}
  By $(\VD)$, there exists a constant $C_D$ such that
  \begin{align*}
  \frac{d(x,y)^\theta}{V(x,d(x,y))} &\leq \frac{C_D d(x,y)^\theta}{V(x,2 d(x,y))}
  \leq \sum_{d(x,y)^\theta \leq n \leq 2 d(x,y)^\theta} \frac{C_D}{V(x,n^{1/\theta})},
  \end{align*}
  as desired. 
   For the third summand of (\ref{eq:greenupper1}), we simply use that $v(t) \leq 1$ to get
  \[
  \sum_{d(x,y)^\theta \leq n \leq 2D^\theta} k^n(x,y) \leq C_1 \volsum 1.
  \]
  In the range of the fourth summand of (\ref{eq:greenupper1}), we know that $V(x,n^{1/\theta}) = 1$.
  By Lemma \ref{lem:spectralgap} there exists constants $C_5, C_6, C_7> 0$ such that
  \begin{equation}
    \sum_{n > 2D^\theta } |k^n(x,y) - 1|
    \leq C_5 \sum_{n > 2D^\theta } \exp(- C_6 n/D^\theta)
    \leq C_5 D^\theta \int_1^{\infty} \exp(- C_6 u) \; du
    \leq C_7 D^\theta.
    \label{eq:exptail}
  \end{equation}
\end{proof}

\begin{cor}
Suppose that the Markov kernel $\syst$ is lazy and satisfies \assumptions{}.
Let $x, y \in \V$ such that $d(x,y) \geq c_0 D$ for some $c_0 > 0$.
Then there exists $C_1 > 0$ (depending on $c_0$ but not $x,y$) such that
\begin{equation*}
    |\G(x,y)| \leq C_1 D^\theta.
\end{equation*}
\label{cor:greenlowerdiameter}
\end{cor}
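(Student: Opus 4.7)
The plan is to invoke Proposition \ref{prop:greenupper} directly and then bound every summand in the resulting expression by a universal constant, using the hypothesis $d(x,y) \geq c_0 D$ together with volume doubling.

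First I would apply Proposition \ref{prop:greenupper} to obtain
\begin{equation*}
|\G(x,y)| \leq \sum_{d(x,y)^\theta \leq n \leq 2D^\theta} \frac{C}{V(x,n^{1/\theta})}.
\end{equation*}
The range of summation is nonempty only for $n \geq d(x,y)^\theta \geq c_0^\theta D^\theta$, so every index $n$ in the sum satisfies $n^{1/\theta} \geq c_0 D$. This gives the crucial lower bound on the radius appearing in the volume term.

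Next I would bound the volume from below. Since $D$ is the diameter, $B(x,D) = \V$ and hence $V(x,D) = \pi(\V) = 1$. Iterating $(\VD)$ a finite number of times (depending only on $c_0$) yields a constant $c > 0$, depending on $c_0$ and $C_D$ only, such that
\begin{equation*}
V(x, n^{1/\theta}) \geq V(x, c_0 D) \geq c \, V(x, D) = c,
\end{equation*}
for every $n$ in the summation range.

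Finally, since each summand is at most $C/c$ and the number of indices is bounded by $2D^\theta$, I conclude
\begin{equation*}
|\G(x,y)| \leq \frac{C}{c} \cdot 2D^\theta = C_1 D^\theta,
\end{equation*}
with $C_1$ depending only on $c_0$ and the structural constants of $\syst$. There is no real obstacle here: the statement is essentially a corollary bookkeeping exercise that packages Proposition \ref{prop:greenupper} with the trivial observation that $V(x,D) = 1$ and the doubling property controls smaller balls of radius comparable to $D$.
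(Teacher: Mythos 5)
Your proof is correct and follows essentially the same route as the paper: apply Proposition \ref{prop:greenupper}, use $d(x,y)\geq c_0D$ to see every radius in the sum is at least $c_0D$, bound $V(x,c_0D)$ below by a constant via volume doubling (this is exactly the paper's Lemma \ref{lem:constantvol}), and count the number of terms. No issues.
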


\begin{proof}
We simply apply Proposition \ref{prop:greenupper} to get
\begin{align*}
|\G(x,y)| &\leq \sum_{d(x,y)^\theta \leq n \leq 2D^\theta} \frac{C_1}{V(x,n^{1/\theta})}
\leq \sum_{c_0^\theta D^\theta \leq n \leq 2D^\theta}\frac{C_1}{V(x,n^{1/\theta})}
\leq (2-c_0^\theta) D^\theta \frac{1}{V(x, c_0 D)}.
\end{align*}
By Lemma \ref{lem:constantvol}, we know that $V(x, c_0 D)$ is bounded below by a constant, which gives the desired result.
\end{proof}

\begin{prop}
Suppose that the Markov kernel $\syst$ is lazy and  satisfies \assumptions{}.
Then there exists $C_1, C_2 > 0$ such that for all $x \in \V$
\begin{equation*}
  \G(x,x) \geq  \sum_{0 \leq n \leq 2D^\theta} \frac{C_1}{V(x,n^{1/\theta})} - C_2 D^\theta.
\end{equation*}
\label{prop:greenlower}
\end{prop}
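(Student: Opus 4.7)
The plan is to write $\G(x,x) = \sum_{n \geq 0}(k^n(x,x)-1)$ and bound each summand from below on the range $0 \leq n \leq 2D^\theta$, after first verifying that every summand is nonnegative so that the tail $n > 2D^\theta$ can simply be discarded. The single heat-kernel input is Theorem \ref{thm:ds-gaussian} applied with $y=x$: since $d(x,x)=0$, the exponential factor equals $1$ and one obtains the on-diagonal estimate
\begin{equation*}
k^n(x,x) + k^{n+1}(x,x) \;\geq\; \frac{c_1}{V(x,n^{1/\theta})}.
\end{equation*}

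\textbf{Exploiting laziness.} The crucial step is to convert this two-step bound into a single-term bound of the form $k^n(x,x) \geq c_1/(2V(x,n^{1/\theta}))$. For this I use that laziness of the reversible chain $K$ forces all eigenvalues $\beta_i$ to lie in $[0,1]$. Consequently, with $\phi_0 \equiv 1$ the spectral decomposition yields
\begin{equation*}
k^n(x,x) - 1 \;=\; \sum_{i \geq 1} \beta_i^n \phi_i(x)^2 \;\geq\; 0,
\end{equation*}
where each summand is nonnegative and nonincreasing in $n$. Hence $n \mapsto k^n(x,x)$ is nonincreasing and everywhere $\geq 1$, so $k^n(x,x) \geq \tfrac{1}{2}(k^n(x,x)+k^{n+1}(x,x))$, and the desired single-term lower bound follows from Theorem \ref{thm:ds-gaussian}.

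\textbf{Assembly.} Combining the two previous paragraphs with the nonnegativity of the tail gives
\begin{equation*}
\G(x,x) \;\geq\; \sum_{n=0}^{\lfloor 2D^\theta \rfloor}\!\bigl(k^n(x,x)-1\bigr) \;\geq\; \sum_{n=0}^{\lfloor 2D^\theta \rfloor} \frac{c_1}{2\,V(x,n^{1/\theta})} \;-\; (2D^\theta + 1),
\end{equation*}
which is the claimed inequality with $C_1 := c_1/2$ and any $C_2$ chosen large enough to absorb the additive $+1$ (for instance $C_2 = 3$, using $D \geq 1$).

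\textbf{Main obstacle.} The only conceptual point is upgrading the pair lower bound of Theorem \ref{thm:ds-gaussian} to a single-term lower bound; this is precisely where laziness is needed, because it forces $\beta_i \geq 0$ and therefore the required monotonicity of $k^n(x,x)$. Without laziness one would have to work with $k^n + k^{n+1}$ throughout and check that this does not affect the final sum. Once this issue is dealt with, the argument is a direct summation that mirrors, in reverse, the structure of the upper bound in Proposition \ref{prop:greenupper}.
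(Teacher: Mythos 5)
Your proof is correct, and it takes a genuinely different and somewhat more economical route than the paper's. The paper splits the series at $4D^\theta$, writes $\sum_{0\le n\le 4D^\theta}(k^n(x,x)-1)\ge -4D^\theta+\sum_{0\le n\le 4D^\theta}k^n(x,x)$, converts $\sum_{0\le n\le 4D^\theta}k^n(x,x)\ge\frac12\sum_{0\le n\le 2D^\theta}(k^n(x,x)+k^{n+1}(x,x))$ by a telescoping rearrangement that only needs $k^n\ge 0$, and then controls the remaining tail $\sum_{n>4D^\theta}|k^n(x,x)-1|\le C_2 D^\theta$ via Lemma \ref{lem:spectralgap}. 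You instead exploit laziness at the spectral level: since $K$ lazy forces $\beta_i\in[0,1]$, the identity $k^n(x,x)-1=\sum_{i\ge 1}\beta_i^n\phi_i(x)^2$ shows simultaneously that every term of the Green series is nonnegative (so the tail $n>2D^\theta$ can simply be discarded) and that $n\mapsto k^n(x,x)$ is nonincreasing (so the two-step bound from Theorem \ref{thm:ds-gaussian} becomes the one-step bound $k^n(x,x)\ge\frac12(k^n(x,x)+k^{n+1}(x,x))\ge \frac{c_1}{2V(x,n^{1/\theta})}$). This avoids any appeal to Lemma \ref{lem:spectralgap} and to the rearrangement trick, at the cost of leaning more directly on the spectral decomposition and on laziness; the paper's version, while longer, is structured to parallel the upper-bound argument of Proposition \ref{prop:greenupper} and keeps the spectral input confined to the already-established Lemma \ref{lem:spectralgap}. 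Both arguments give the same constants up to choice, and both correctly use that the on-diagonal lower bound of Theorem \ref{thm:ds-gaussian} is available for all $n\ge 0$ because $d(x,x)=0$.
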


\begin{proof}
In the sum of (\ref{eq:green}), we use the lower bound of $-|k^n(x,x)-1|$ for $n > 8D^\theta$, and break up the sum as follows:
\begin{equation}
\G(x,x) \geq - 4D^\theta + \sum_{0 \leq n \leq 4D^\theta} k^n(x,x)
- \sum_{n > 4D^\theta} |k^n (x,x) - 1|.
\label{eq:glower}
\end{equation}
For the second summand of (\ref{eq:glower}), we use Theorem \ref{thm:ds-gaussian} with $d(x,x) = 0$ to get
\begin{align*}
  \sum_{0 \leq n \leq 4D^\theta} k^n(x,x)
  \geq \frac12 \sum_{0 \leq n \leq 2D^\theta} (k^n(x,x) + k^{n+1}(x,x))
  \geq \frac12 \volsumo{C_1}{x}.
\end{align*}
For the third summand of (\ref{eq:glower}), we use the same technique that we used for the proof of Proposition \ref{prop:greenupper} in (\ref{eq:exptail}) to get
\begin{equation*}
  \sum_{n > 4D^\theta} |k^n (x,x) - 1| \leq C_2 D^\theta,
\end{equation*}
for some $C_2 > 0$.
Putting these bounds together, we get the desired inequality.
\end{proof}

\section{Exit time estimates} \label{sec-Hit}

\begin{lem}
    Let $o \in \V$ and $\{X_t\}_{t \geq 0}$ be the random walk driven by a Markov kernel $\syst$ with $X_0 = o$.
    For all $R > 0$ and $t \geq 0$,
    \begin{equation}
    \P_o \left( \sup_{0 < s \leq t} d(o, X_s) > R \right)
    \leq 2 \sup_{0 < s \leq t} \left( \sup_{x \in B(o,R+1)} \P_x ( d(x,X_s) > R/2) \right)
    \label{eq:exit1}
    \end{equation}
    \label{lem:exit1}
\end{lem}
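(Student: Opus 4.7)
The plan is a standard Lévy-type maximal inequality: run the walk until it first exits $B(o,R)$ and then use the strong Markov property. Define $\tau = \inf\{s \geq 1 : d(o, X_s) > R\}$, so the event we want to bound is $\{\tau \leq t\}$. Since $K(x,y) = 0$ whenever $x \neq y$ and $x \not\sim y$, the chain moves at most one step in graph distance per unit time, and consequently $X_\tau \in B(o, R+1)$ on $\{\tau < \infty\}$. Write $A$ for half the right-hand side of \eqref{eq:exit1}, i.e.,
\begin{equation*}
A = \sup_{0 < s \leq t} \sup_{x \in B(o, R+1)} \P_x(d(x, X_s) > R/2).
\end{equation*}

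The key decomposition is
\begin{equation*}
\P_o(\tau \leq t) = \P_o(\tau \leq t,\; d(X_\tau, X_t) \leq R/2) + \P_o(\tau \leq t,\; d(X_\tau, X_t) > R/2).
\end{equation*}
For the first term, on the event $\{d(X_\tau, X_t) \leq R/2\} \cap \{d(o, X_\tau) > R\}$, the triangle inequality gives $d(o, X_t) > R/2$, so this term is bounded by $\P_o(d(o, X_t) > R/2) \leq A$ (taking $x = o \in B(o, R+1)$ and $s = t$). For the second term, condition on $(\tau, X_\tau)$ and apply the strong Markov property:
\begin{equation*}
\P_o(\tau \leq t,\; d(X_\tau, X_t) > R/2)
= \sum_{s \leq t} \sum_{x \in B(o, R+1)} \P_o(\tau = s,\, X_\tau = x)\, \P_x(d(x, X_{t-s}) > R/2).
\end{equation*}
The term $s = t$ vanishes since then $X_{t-\tau} = X_0 = x$ and the inner probability is $0$; for $s < t$ we have $t - s \in (0, t]$, so the inner probability is at most $A$. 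Thus this piece is bounded by $A \cdot \P_o(\tau \leq t)$.

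Combining the two bounds yields $\P_o(\tau \leq t) \leq A + A\,\P_o(\tau \leq t)$, i.e., $(1-A)\P_o(\tau \leq t) \leq A$. If $A \geq 1/2$, then $2A \geq 1 \geq \P_o(\tau \leq t)$ and the claim is immediate; if $A < 1/2$, then $\P_o(\tau \leq t) \leq A/(1-A) \leq 2A$. Either way, $\P_o(\tau \leq t) \leq 2A$, which is exactly \eqref{eq:exit1}.

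The one subtlety worth double-checking is the $s = t$ boundary in the strong Markov step: it is precisely because that case must be excluded that the supremum on the right-hand side is taken over $0 < s \leq t$ (strictly positive $s$), matching the form stated in the lemma. Everything else reduces to the triangle inequality, a first-exit decomposition, and standard conditioning.
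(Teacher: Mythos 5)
Your proof is correct, and it follows the same Lévy-type maximal inequality strategy as the paper: stop at the first exit of $B(o,R)$, use the strong Markov property, and apply the triangle inequality. The one substantive difference is the choice of event to split on. You split on whether $d(X_\tau, X_t)\leq R/2$, which yields the self-referential bound $\P_o(\tau\leq t)\leq A + A\,\P_o(\tau\leq t)$ and then requires solving (with the $A\geq 1/2$ vs.\ $A<1/2$ case check) to recover the factor $2$. The paper instead splits on $E_2=\{d(o,X_t)\leq R/2\}$, writing $\P_o(E_1)\leq \P_o(E_1\cap E_2)+\P_o(E_2^c)$: on $E_1\cap E_2$ the triangle inequality forces $d(X_{T_{B(o,R)}},X_t)>R/2$ (so strong Markov gives $\leq A$), and $\P_o(E_2^c)\leq A$ directly, so the $2A$ bound falls out immediately with no algebra. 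Both are valid; the paper's choice of splitting event avoids the self-reference, while yours is the more textbook Lévy--Ottaviani form. Your attention to the $s=t$ boundary term in the strong Markov step (and its consistency with the strict inequality $0<s\leq t$ in the supremum) is correct and worth keeping.
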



\begin{proof}
  Consider a random walk $\{X_n\}_{n \geq 0}$ starting at $o \in \V$, such that $\sup_{0 < s \leq t} d(o, X_s) > R$ for some $R, t \geq 0$. 
  Recall from (\ref{eq:exittime}) that the exit time of $B \subset \V$ is $\Tau_B = \inf\{t:X_n \not \in B \}.$
  Define the events 
  \begin{align*}
    E_1 &= \{ T_{B(o,R)} \leq t \} = \{\exists s \leq t: d(o,X_s) > R \} = \left\{ \sup_{0 \leq s \leq t} d(o,X_s) > R \right\} \\
    E_2 &= \{ d(o,X_t) \leq R/2 \}.
  \end{align*}
  We will prove the desired inequality (\ref{eq:exit1}) via 
  \begin{equation}
    \P_o (E_1) \leq \P_o(E_1 \cap E_2) + \P_o(E_2^c).
    \label{eq:exit2}
  \end{equation}
  The first summand can be written as
  \begin{equation}
    \P_o \left(E_1 \cap E_2 \right) = \sum_{k = 0}^t \P_o \left(T_{B(o,R)} = k \right) 
    \P_o \left(E_1 \cap E_2 | T_{B(o,R)} = k \right). 
    \label{eq:exit3}
  \end{equation}
  \begin{figure}
  \includegraphics[width=8cm]{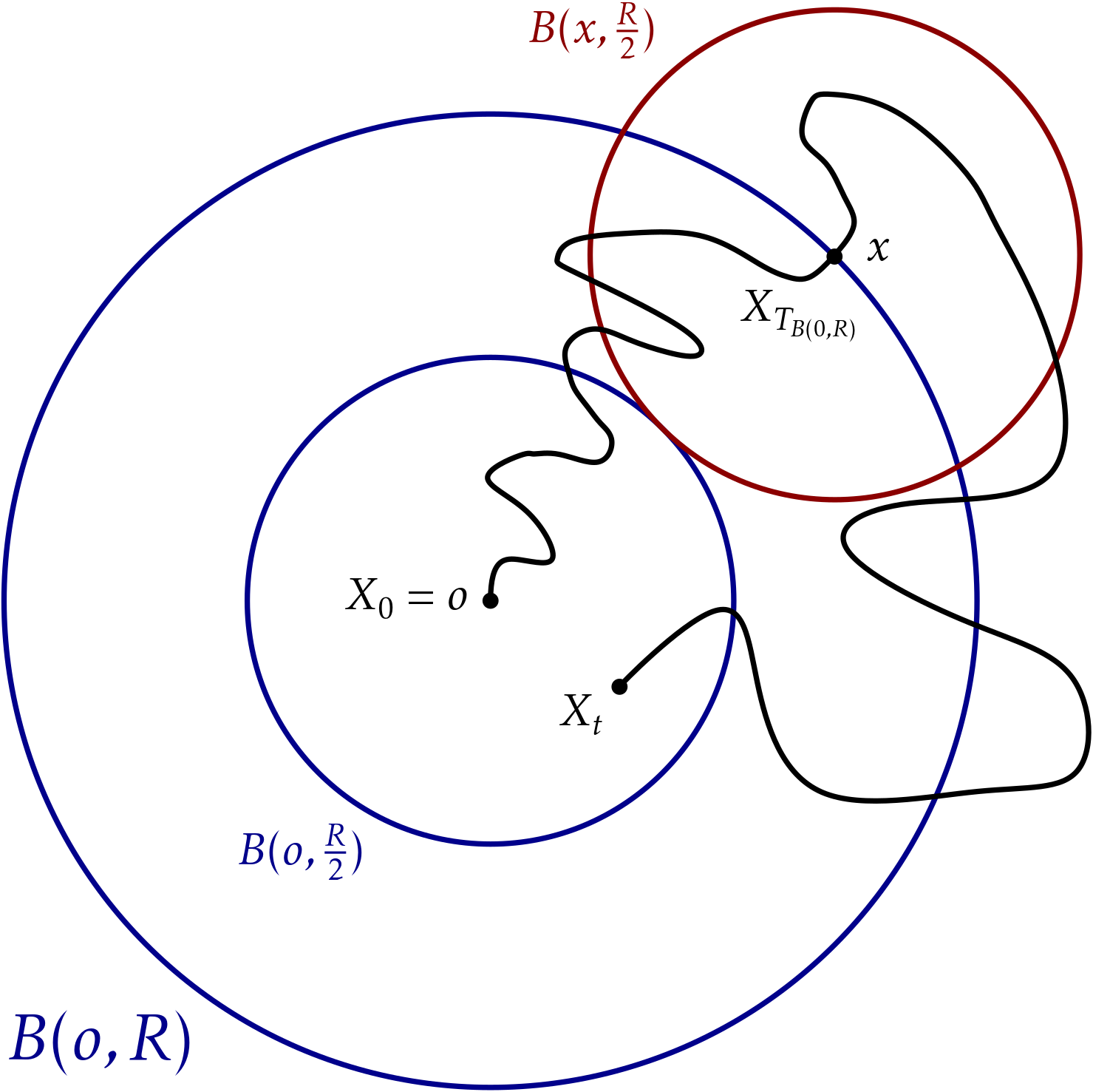}
  \caption{In the event $E_1 \cap E_2$, the walk exits the ball $B(o,R)$ for the first time at say point $x$. Then if the walk returns to the ball $B(o,R/2)$ at time $t$, it must have also exited the ball $B(x, R/2)$ by then.}
  \label{fig:exit}
  \end{figure}
  For a fixed $k \in [0, t]$, consider the event $E_1 \cap E_1$ given $T_{B(o,R)} = k$. 
  This means that the walk starts at $o$, reaches distance $R+1$ from $o$ at time $k$, and returns to the ball of $B(o,R/2)$ by time $t$, see Figure \ref{fig:exit}. By the triangle inequality, we have 
  \begin{equation*}
    R+1 = d(o,X_t) \leq d(o, X_k) + d(X_k, X_t) \leq \frac R2 + d(X_t,X_k).
  \end{equation*}
  Thus, we have $d(X_t, X_k) > R/2$, and
  \begin{align*}
    \P_o\left(E_1 \cap E_2 | T_{B(o,R)} = k \right) &\leq
    \sup_{x \in B(o,R+1)} \P_o \left( \left. d(X_t, x) > \frac R2 \; \right| \; T_{B(o,R)} = k, X_k = x \right).
  \end{align*}
  By the strong Markov property, for any $x \in B(o,R+1)$, we have
  \begin{align*}
     \P_o \left( \left. d(X_t, x) > \frac R2 \; \right| \; T_{B(o,R)} = k, X_k = x\right)
       &=  \P \left( \left. d(X_{t-k}, x) > \frac R2 \; \right| \; X_0 = x \right)\\
         &\leq \sup_{0 \leq s \leq t} \P_x \left(  d(X_s, x) > \frac R2 \right).
  \end{align*}
  Combined with (\ref{eq:exit3}), the above implies
  \begin{equation}
    \P_o \left(E_1 \cap E_2 \right) \leq \sup_{0 \leq s \leq t} \left( \sup_{x \in B(o,R+1)} \P_x \left(  d(X_s, x) > \frac R2 \right) \right).
    \label{eq:exit4}
  \end{equation}
  It is also clear that 
  \[
    \P_o(E_2^c) = \P_o \left( d(o,X_t) > R/2 \right) \leq \sup_{0 \leq s \leq t} \left( \sup_{x \in B(o,R+1)} \P_x \left(  d(X_s, x) > \frac R2 \right) \right).
  \]
  This, with (\ref{eq:exit2}) and (\ref{eq:exit4}), gives our desired result.
\end{proof}

\begin{lem}
Suppose that the Markov kernel $\syst$ satisfies \assumptions{}.
Let $x \in \V$ and $\{X_n\}_{n \geq 0}$ be the canonical random walk driven by $K$ with $X_0 = x$.
Then, there exists a constant $C > 0$, independent of $x$ such that
\begin{equation}
    \P_x ( d(x,X_s) > R) \leq C \exp\left( -(R^\theta/s)^{\frac{1}{\theta-1}}\right),
    \label{eq:diasum1}
\end{equation}
for all $R, s \geq 0$.
\label{lem:exit2}
\end{lem}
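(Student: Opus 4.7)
The plan is to combine the sub-Gaussian heat kernel upper bound of Theorem \ref{thm:ds-gaussian} with a dyadic annular decomposition controlled by volume doubling. First, I would dispense with the trivial regime: when $s \geq R^\theta$, the exponent $(R^\theta/s)^{1/(\theta-1)}$ is at most $1$, so the right-hand side is bounded below by a positive absolute constant, and the inequality holds trivially as soon as $C$ is chosen large enough. So I assume $\rho := R/s^{1/\theta} \geq 1$.

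In this regime I would decompose
\begin{equation*}
\P_x(d(x,X_s) > R) = \sum_{j \geq 0} \sum_{y \in A_j} k^s(x,y) \pi(y),
\end{equation*}
with dyadic annuli $A_j = \{y \in \V : 2^j R < d(x,y) \leq 2^{j+1} R\}$. On each $A_j$, the upper bound in Theorem \ref{thm:ds-gaussian} yields
\begin{equation*}
k^s(x,y) \leq \frac{C_1}{V(x, s^{1/\theta})} \exp\left(-C_2 (2^{j\theta} R^\theta/s)^{1/(\theta-1)}\right),
\end{equation*}
while $\sum_{y \in A_j} \pi(y) \leq V(x, 2^{j+1} R)$. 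Iterated $(\VD)$, valid because $2^{j+1}R \geq s^{1/\theta}$, bounds $V(x, 2^{j+1} R)/V(x, s^{1/\theta}) \leq C_3 (2^{j+1}\rho)^{\alpha}$ for a doubling exponent $\alpha > 0$. Combining, the $j$th annular contribution is at most $C_4\, 2^{j\alpha} \rho^\alpha \exp(-C_2\, 2^{j\theta/(\theta-1)} v)$, where $v := (R^\theta/s)^{1/(\theta-1)} \geq 1$.

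Finally I would sum over $j$. Since $\theta/(\theta-1) > 1$ and $v \geq 1$, the super-geometric exponential decay in $j$ dominates the polynomial factor $2^{j\alpha}$, so the series is bounded by a constant multiple of its $j=0$ term $C_5 \rho^\alpha \exp(-C_2 v)$. A routine polynomial-vs-exponential absorption, using $\rho^\alpha = v^{\alpha(\theta-1)/\theta}$ together with $\sup_{v \geq 1} v^\beta e^{-\delta v} < \infty$, removes the $\rho^\alpha$ prefactor at the cost of slightly reducing the constant in the exponent, producing a bound of the form $C \exp(-C'(R^\theta/s)^{1/(\theta-1)})$, which matches the stated conclusion (with the exponent constant absorbed into the notation). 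No individual step is difficult; the main thing to watch is that the doubling exponent $\alpha$ and all absorbed constants depend only on the parameters entering \assumptions{}, not on $x$, $R$, or $s$.
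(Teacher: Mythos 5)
Your proof is correct and follows essentially the same route as the paper: dispose of the regime $s\ge R^\theta$ trivially, decompose the tail probability over dyadic annuli, apply the sub-Gaussian upper bound of Theorem \ref{thm:ds-gaussian} together with iterated volume doubling to control $V(x,2^{j+1}R)/V(x,s^{1/\theta})$, and then absorb the polynomial prefactor into the exponential using $\theta/(\theta-1)>1$. The only cosmetic difference is the order of operations (the paper absorbs the polynomial term-by-term before summing, whereas you sum first and absorb $\rho^\alpha$ at the end), which does not change the argument.
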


\begin{proof}
  Note that if $R^\theta \leq s$, the right hand side of (\ref{eq:diasum1}) is greater than one by choosing $C \geq 1$, so the inequality is trivially satisfied. 
  Thus, we solely consider the case when $R^\theta \geq s$. 
  First, the probability in (\ref{eq:diasum1}) can be written in terms of the kernel and the normalize kernel as: 
  \[
    \P_x ( d(x,X_s) > R) = \sum_{y \in \V: d(x,y) > R} K^s(x,y) = \sum_{y \in \V: d(x,y) > R} k^s(x,y) \pi(y).
  \]
  By Theorem \ref{thm:ds-gaussian}, we have that there exists $C_1, C_2 > 0$ such that
  \begin{equation}
    \P_x ( d(x,X_s) > R) \leq C_1 \sum_{y \in \V: d(x,y) > R} \frac{\pi(y)}{V\left(x, s^{1 / \theta}\right)} \exp \left(-C_2 \left(\frac{d(x, y)^\theta}{s}\right)^{1 /(\theta-1)}\right).
    \label{eq:exit10}
  \end{equation}
  Now, we break up the sum on the right hand side into partitions of vertices $y$ such that $2^k R < d(x,y) \leq 2^{k+1} R$: 
  \begin{align*}
    \P_x ( d(x,X_s) > R)  
    &\leq C_1 \sum_{k=0}^\infty \frac{V(x,2^{k+1} R)}{V\left(x, s^{1 / \theta}\right)} \exp \left(-C_2 \left(\frac{(2^k R)^\theta}{s}\right)^{1 /(\theta-1)}\right) \\
    &\leq C_3 \sum_{k=0}^\infty \left( \frac{2^k R}{s^{1/\theta}} \right)^{C_4} \exp \left(-C_2 \left(\frac{2^k R}{s^{1/\theta}}\right)^{\frac{\theta}{\theta-1}}\right),
  \end{align*}
  where the last inequality is by ($\VD$). 
  Since $\frac{\theta}{\theta-1} > 1$, there exists a constant $C_5 > 0$ such that for all $x \geq 1$, 
  \[ 
    x^{C_4} \exp \left(-C_2 x^{\frac{\theta}{\theta-1}}\right) \leq 
    C_5 \exp \left(- \frac{C_2 x^{\frac{\theta}{\theta-1}} }{2} \right).
  \]
  Recall that $R^\theta > s$, so we can apply the above statement to simplify (\ref{eq:exit10}) further: 
  \begin{align*}
    \P_x ( d(x,X_s) > R) 
    &\leq C_6 \sum_{k=0}^\infty \exp \left( - C_7 \left( \frac{2^k R}{s^{1/\theta}} \right)^{\frac{\theta}{\theta-1}} \right)
    \leq C_6 \exp \left( - C_7 \left( \frac{R^\theta}{s} \right)^{\frac{1}{\theta-1}} \right)
    \left( 1 +  \sum_{k=1}^\infty \exp \left( - C_7 2^k \right) \right) \\
    &\leq C_8 \exp \left( - C_7 \left( \frac{R^\theta}{s} \right)^{\frac{1}{\theta-1}} \right).
  \end{align*}
\end{proof}

\section{Expected hitting time estimates}

We first use the exit time estimates established in the previous section to show that the expected hitting time is at least of order $D^\theta$.

\begin{lem}
Suppose that the Markov kernel $\syst$ satisfies \assumptions{}.
  Let $x,y \in \V$ such that $d(x,y) \geq c_0 D$ for some $c_0 > 0$.
  Then, there exists $C > 0$ such that
  \begin{equation*}
    H(x,y) \geq C D^\theta.
  \end{equation*}
  \label{lem:hittinglower}
\end{lem}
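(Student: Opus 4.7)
The plan is to argue that, because $y$ lies at distance at least $c_0 D$ from $x$, in order for the walk to hit $y$ it must first leave the ball $B(x, c_0 D/2)$, and the exit time estimates of the previous section show that this is extremely unlikely on a time scale small compared to $D^\theta$. Thus $\tau_y$ is at least of order $D^\theta$ with probability bounded below, which gives the desired lower bound on $H(x,y)$ via Markov's inequality.

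Concretely, I would set $R = c_0 D/2$ and pick $t = \epsilon D^\theta$, with $\epsilon > 0$ to be chosen small later. The containment of events
\begin{equation*}
\{\tau_y \leq t\} \subseteq \left\{ \sup_{0 < s \leq t} d(x, X_s) > R \right\}
\end{equation*}
is immediate from $d(x,y) \geq c_0 D > R$. Applying Lemma \ref{lem:exit1} to bound the right-hand probability by a supremum of probabilities of the form $\P_z(d(z, X_s) > R/2)$ over $z \in B(x, R+1)$ and $0 < s \leq t$, and then bounding each of those via Lemma \ref{lem:exit2}, one obtains
\begin{equation*}
\P_x(\tau_y \leq t) \leq 2 C \exp\left( - \bigl((R/2)^\theta / t\bigr)^{1/(\theta-1)} \right) = 2C \exp\left( - \bigl((c_0/4)^\theta/\epsilon\bigr)^{1/(\theta-1)} \right).
\end{equation*}

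For $\epsilon$ sufficiently small (depending only on $c_0$, $\theta$, and the constants from our standing assumptions), the right-hand side is at most $1/2$. Consequently $\P_x(\tau_y > t) \geq 1/2$, and then
\begin{equation*}
H(x,y) = \E_x[\tau_y] \geq t \, \P_x(\tau_y > t) \geq \tfrac{1}{2} \epsilon D^\theta,
\end{equation*}
which is the asserted bound with $C = \epsilon/2$. The only step that requires any care is verifying that the hypotheses of Lemmas \ref{lem:exit1} and \ref{lem:exit2} are satisfied uniformly over the starting points $z$ appearing in the supremum, but since \assumptions{} is a global hypothesis on the kernel, the constant $C$ of Lemma \ref{lem:exit2} is independent of the basepoint, so no additional obstacle arises.
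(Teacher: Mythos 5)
Your proposal is correct and follows essentially the same route as the paper: both arguments combine the exit-time reflection bound (Lemma \ref{lem:exit1}) with the sub-Gaussian tail estimate (Lemma \ref{lem:exit2}) to show the walk stays inside a ball of radius comparable to $d(x,y)$ for a time of order $D^\theta$ with probability at least $1/2$, and then conclude by Markov's inequality. The only cosmetic difference is that the paper works with $r = d(x,y)$ and the time scale $C_0 r^\theta$ rather than your $R = c_0 D/2$ and $t = \epsilon D^\theta$.
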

\begin{proof}
  Let $r = d(x,y) \geq c_0 D$.
  If the random walk starting at $x$ hits $y$, it must exit the ball of radius $r/2$. 
  Applying Markov's inequality, we have that
  \begin{equation}
    \E_x [ \tau_y] > \E_x [T_{B(x,r/2)}] > t \P_x [T_{B(x,r/2)} > t], 
    \label{eq:markov}
  \end{equation}
  for all $t > 0$. 

  In addition, by Lemma \ref{lem:exit2}, there exists a constant $C > 0$, such that
  \begin{equation*}
      \P_x ( d(x,X_s) > r/4) \leq C \exp\left( -(r^\theta/s)^{\frac{1}{\theta-1}}\right),
  \end{equation*}
  for all $x \in \V$ and $s \geq 0$.
  We can choose a constant $C_0 > 0$ such that for all $x \in \V$ and $0 \leq s \leq C_0r^\theta$, it is true that
  \begin{equation}
  \P_x ( d(x,X_s) > r/4) < 1/4.
  \label{eq:exit0}
  \end{equation}
  Applying Lemma \ref{lem:exit1} with $t = C_0r^\theta$, we get 
   \begin{align*}
    \P_x (T_{(B(x,r/2)} \leq t) &= \P_x \left( \sup_{0 < s \leq t} d(x, X_t) > r/2 \right)
    \leq 2 \sup_{0 < s \leq t} \left( \sup_{z \in B\left(x,\frac r2+1\right)} \P_z ( d(z,X_s) > r/4) \right) \\
    &\leq 2 \sup_{0 < s \leq t} \left(C \exp\left( -(r^\theta/s)^{\frac{1}{\theta-1}}\right) \right) 
\leq \frac 12  \qquad \textrm{(by (\ref{eq:exit0})).} 
  \end{align*}
  Combined with (\ref{eq:markov}), there exists $C_1 >0$ such that
  \[
    \E_x [ \tau_y] \geq \frac{C_0 r^\theta}{2} \geq C_1 D^\theta. 
  \]
\end{proof}

\begin{thm}
\label{thm:main}
Suppose that the Markov kernel $\syst$ satisfies \assumptions{}.
Let $x,y \in \V$ such that $d(x,y) \geq c_0 D$ for some $c_0 > 0$.
Then, there exists constants $C_1, C_2 > 0$ such that
\begin{equation*}
\volsumo{C_1}{y} \leq H(x,y) \leq \volsumo{C_2}{y}.
\end{equation*}
The upper bound is valid, in fact, for all $x,y \in \V$.
\end{thm}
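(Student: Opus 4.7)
The plan is to exploit the identity $H(x,y) = \G(y,y) - \G(x,y)$ from Proposition \ref{prop:hittinggreen} and estimate the two Green function terms using the work of Section \ref{sec-Green}. Since the Green function estimates I intend to invoke assume laziness, I would first reduce to the lazy case: by the remarks on laziness following Proposition \ref{prop:hittinggreen}, the hitting times for $K$ and for its lazy version $K_\epsilon$ differ only by the universal factor $(1-\epsilon)^{-1}$, so any two-sided estimate proven in the lazy setting transfers to the general case up to a multiplicative constant.

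For the \emph{upper bound}, I would first apply Proposition \ref{prop:greenupper} at the diagonal entry $(y,y)$, so that $d(y,y)=0$ and the sum ranges over all $0 \leq n \leq 2D^\theta$, giving $\G(y,y) \leq \volsumo{C}{y}$ directly. Then I would apply Proposition \ref{prop:greenupper} to the pair $(x,y)$, bounding $|\G(x,y)|$ by a sum over $d(x,y)^\theta \leq n \leq 2D^\theta$ of $C/V(x, n^{1/\theta})$. On this range $n^{1/\theta} \geq d(x,y)$, so the inclusions $B(y, n^{1/\theta}) \subseteq B(x, 2 n^{1/\theta})$ and $B(x, n^{1/\theta}) \subseteq B(y, 2 n^{1/\theta})$, combined with $(\VD)$, yield $V(x, n^{1/\theta}) \asymp V(y, n^{1/\theta})$. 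This lets me dominate the bound on $|\G(x,y)|$ by $\volsumo{C'}{y}$ as well. Adding the two estimates gives the desired upper bound; notice that the argument never uses $d(x,y) \geq c_0 D$, so the upper bound is valid for all $x,y$, as claimed.

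For the \emph{lower bound}, I start from $H(x,y) \geq \G(y,y) - |\G(x,y)|$. Proposition \ref{prop:greenlower} supplies $\G(y,y) \geq \volsumo{C_1}{y} - C_2 D^\theta$, while Corollary \ref{cor:greenlowerdiameter}, which requires $d(x,y) \geq c_0 D$, supplies $|\G(x,y)| \leq C_3 D^\theta$. Together these give $H(x,y) \geq \volsumo{C_1}{y} - (C_2+C_3) D^\theta$. To finish, I combine this with the independent lower bound $H(x,y) \geq C_4 D^\theta$ of Lemma \ref{lem:hittinglower}: taking the convex combination of these two inequalities with weight $\lambda = C_4/(C_2+C_3+C_4)$ on the first and $1-\lambda$ on the second cancels the $D^\theta$ contributions exactly, leaving $H(x,y) \geq \lambda C_1 \volsumo{1}{y}$, which is the claimed lower bound.

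The principal subtlety of this plan is the last step. The Green function lower bound alone is insufficient because of the $-(C_2+C_3) D^\theta$ error term, and a priori there is no reason for the constant $C_1$ to beat $C_2+C_3$. It is precisely Lemma \ref{lem:hittinglower} --- established independently through the exit time estimates of Section \ref{sec-Hit} rather than through the Green function --- that supplies the complementary bound needed to close the argument via the convex combination trick.
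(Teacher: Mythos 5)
Your proof is correct and follows the same strategy as the paper's: reduce to the lazy case, write $H(x,y)=\G(y,y)-\G(x,y)$ via Proposition \ref{prop:hittinggreen}, and combine Proposition \ref{prop:greenupper}, Proposition \ref{prop:greenlower}, Corollary \ref{cor:greenlowerdiameter}, and Lemma \ref{lem:hittinglower}; your convex-combination closing of the lower bound is equivalent to the paper's $\max\{\cdot,\cdot\}$ phrasing. The one minor difference is in bounding $|\G(x,y)|$ for the upper bound: the paper uses the symmetry $\G(x,y)=\G(y,x)$ and applies Proposition \ref{prop:greenupper} to the pair $(y,x)$, yielding the sum with $V(y,\cdot)$ directly, whereas you apply it to $(x,y)$ and then invoke $(\VD)$ to replace $V(x,n^{1/\theta})$ by $V(y,n^{1/\theta})$ on the range $n^{1/\theta}\ge d(x,y)$ --- a correct but slightly longer route.
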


\begin{proof}
Without loss of generality, we can assume that $K$ is lazy. For the upper bound, Proposition \ref{prop:greenupper}
and the fact that $\G(\cdot, \cdot)$ is symmetric (by reversibility) give
\begin{align*}
H(x,y) &\leq |\G(y,y)| + |\G(y,x)| 
\leq \volsumo{C_1}{y} + \sum_{d(x,y)^\theta \leq n \leq 2D^\theta} \frac{C_1}{V(y,n^{1/\theta})} 
\leq \volsumo{2C_1}{y}.
\end{align*}
This gives the desired upper bound.

For the lower bound, by Corollary \ref{cor:greenlowerdiameter}, Proposition \ref{prop:greenlower}, and Lemma \ref{lem:hittinglower}, we have
\begin{align*}
H(x,y)
&\geq \max \left\{ \volsumo{C_4}{y} - C_5 D^\theta, C_6 D^\theta \right\}
 \geq \volsumo{C_7}{y},
\end{align*}
for some $C_7 > 0$. 
\end{proof}

\section{Applications and Examples}
\label{sect:exs}
In this section, we describe some interesting applications of our main result, Theorem \ref{thm:main}, to compute $H(x,y)$ for various graphs. 
For brevity, we will write $f(x) \asymp g(x)$ for non-negative functions $f,g$, if there exists $c > 0$ such that $f(x) \leq c g(x)$ and $f(x) \geq c^{-1} g(x)$ for all $x$.

\subsection{Expected hitting times and resistances}
The connection between random walks and electric networks is well known, see \cite{doyle1984random}. Given a random walk driven by a Markov kernel $\syst$, we can define an electric resistance network on $\Gamma$ with conductance
\[
c(x,y) = K(x,y) \pi(x).
\]
Then when $\syst$ and $x,y \in \V$ satisfy the hypothesis of Theorem \ref{thm:main}, the effective resistance between $x$ and $y$ is 
\[\mathcal{R} (x \leftrightarrow y) \asymp H(x,y) + H(y,x) \asymp 
\max \left\{ \sum_{0 \leq n \leq 2D^\theta}\frac{1}{V(y,n^{1/\theta})}, 
\sum_{0 \leq n \leq 2D^\theta}\frac{1}{V(x,n^{1/\theta})}
\right\}.
\]

\begin{figure}
  \centering
       \begin{subfigure}[b]{0.36\textwidth}
         \centering
         \includegraphics[width=\textwidth]{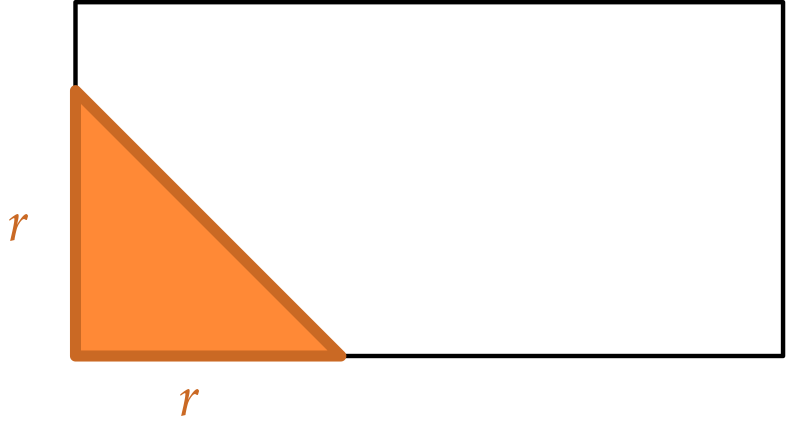}
         \caption{$r \leq b$}
     \end{subfigure}
     \hfill
     \begin{subfigure}[b]{0.36\textwidth}
         \centering
         \includegraphics[width=\textwidth]{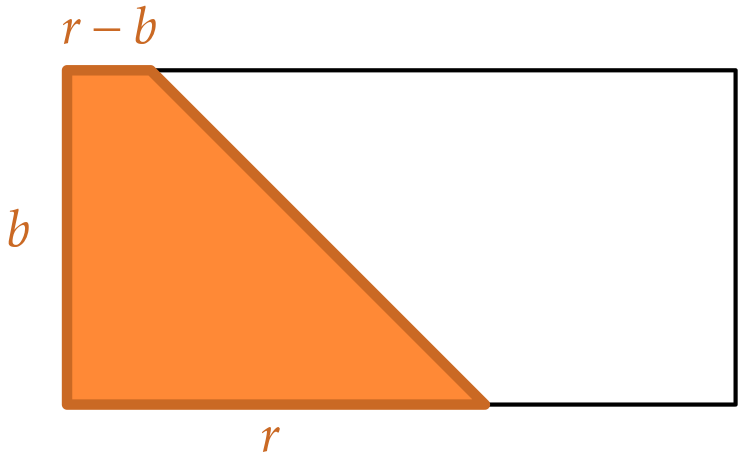}
         \caption{$r > b$}
     \end{subfigure}
  \caption{The shape of $B(o,r)$ changes at $r = b$.}
  \label{fig:rectangle}
\end{figure}

\subsection{Rectangular tori}
\label{sect:tori}
There is extensive literature on computing $H(x,y)$ for various graphs. One important early result is the following theorem by Cox: 

\begin{prop}\cite[Theorem 4]{coxCoalescingRandomWalks1989}
  Consider the simple random walk on the torus $\mathbb{Z}_n^d$. There exist constants $0<c_d \leq C_d<\infty$ such that if $x$ and $y$ are at distance order $n$, the diameter of the torus, then
  \begin{align}
  c_d n^d           &\leq H(x,y) \leq C_d n^d           &&  \text { if } d \geq 3 \nonumber \\ 
  c_2 n^2 \log (n)  &\leq H(x,y) \leq C_2 n^2 \log (n)  &&  \text { if } d=2 \nonumber \\
  c_1 n^2           &\leq H(x,y) \leq C_1 n^2           &&  \text { if } d=1.
  \label{eq:torus-hittime}
  \end{align}
\end{prop}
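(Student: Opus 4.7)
The plan is to derive this proposition as a direct consequence of Theorem \ref{thm:main} applied to the lazy simple random walk on $\Gamma = \mathbb{Z}_n^d$, whose stationary measure is the uniform probability $\pi(z) = n^{-d}$. First I would verify the hypotheses of Theorem \ref{thm:main} with $\theta = 2$: $(\El)$ and $(\VD)$ follow from translation invariance and the local $\mathbb{Z}^d$-structure; $(\PI(2))$ on $\mathbb{Z}_n^d$ is a classical discrete Poincar\'e inequality with constant independent of $n$ (derivable, for instance, from the continuous Poincar\'e inequality on the flat torus $\mathbb{R}^d/\mathbb{Z}^d$ by a standard discretization argument); and $(\CS(2))$ is automatic by the remark following Definition \ref{def:CS}. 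Since the diameter satisfies $D \asymp n$, Theorem \ref{thm:main} then gives
\[
  H(x,y) \asymp \sum_{0 \leq m \leq 2n^2} \frac{1}{V(y, \sqrt{m})}.
\]

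The next step is to compute $V(y, r)$, which is independent of $y$ by translation invariance. For $1 \leq r \leq n$, a standard counting argument yields $|B(y,r)| \asymp r^d$, hence $V(y,r) \asymp r^d/n^d$; for $r \geq n$, the ball eventually covers the entire torus and $V(y,r) \asymp 1$. Inserting these estimates and splitting the range of summation at $m = n^2$, I would rewrite the displayed sum as
\[
  H(x,y) \asymp n^d \sum_{1 \leq m \leq n^2} m^{-d/2} + \sum_{n^2 \leq m \leq 2n^2} 1,
\]
whose second term is of order $n^2$.

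Finally, the three regimes in (\ref{eq:torus-hittime}) follow from evaluating the first sum. When $d = 1$, $\sum_{m=1}^{n^2} m^{-1/2} \asymp n$, so the first term is $\asymp n^2$ and matches the tail contribution, giving $H(x,y) \asymp n^2$. When $d = 2$, $\sum_{m=1}^{n^2} m^{-1} \asymp \log n$, so the first term is $\asymp n^2 \log n$ and dominates, yielding $H(x,y) \asymp n^2 \log n$. When $d \geq 3$, $\sum_{m=1}^{n^2} m^{-d/2}$ is bounded above and below by positive constants (the infinite sum converges), so the first term is $\asymp n^d$, which dominates the $n^2$ tail and gives $H(x,y) \asymp n^d$. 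The argument has no real obstacle once Theorem \ref{thm:main} is in hand; the only input one has to supply for this family of examples is $(\PI(2))$ with a constant uniform in $n$, which is classical, after which the proposition reduces entirely to the volume estimate and the elementary sums above.
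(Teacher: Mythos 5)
Your proposal is correct and follows the same route the paper itself takes: the paper cites Cox for this statement, but it explicitly notes that its own framework recovers it, and the proof of Proposition~\ref{prop:rect} (rectangular tori) is precisely the computation you carry out, specialized to $a_1=\dots=a_N=n$. In both cases one invokes Theorem~\ref{thm:main} with $\theta=2$, uses $V(y,r)\asymp (r/n)^d$ for $r\le n$, and evaluates $\sum_{0\le m\le 2n^2} V(y,\sqrt m)^{-1}$, obtaining the three regimes from the convergence/divergence of $\sum m^{-d/2}$; your derivation is a clean direct instance of this.
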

Early proofs of this result involve careful estimation of the spectrum of the kernel and transition probabilities. Later, \cite[Proposition 10.21]{levinMarkovChainsMixing2017} provides a proof using the relationship of $H(x,y)$and $\mathcal{R}(x \leftrightarrow y)$. 
Then the resistance is estimated from test functions constructed from P\'olya urns. 
The complexity of these proofs increases greatly even when we consider tori with different side lengths.

Consider a rectangular $2$-dimensional torus, $\Gamma = \Z_a \times \Z_b$, where $a \geq b$. 
It is known that that assumptions of Theorem \ref{thm:main} is satisfied with $\theta = 2$. 
Let $x,y \in \Gamma$ be such that $d(x,y) \asymp a$, i.e. of the order of the diameter of $\Gamma$. 
Note that around $r = b$, the volume growth of balls change from quadratic to linear growth, see Figure \ref{fig:rectangle}, and we have 
\begin{equation*}
\#B(x,r) \asymp \begin{cases}
 r^2 &\text{if } r \leq b \\
 b^2 + b(r-b) &\text{if } b < r \leq a.
\end{cases}
\end{equation*}
Applying these estimates to Theorem \ref{thm:main} with $\theta = 2$ , we get 
\begin{equation}
  H(x,y) \asymp ab \left(\log(b) + \frac 1 b (a-b)\right) \asymp \max( a b \log b, a^2).
  \label{eq:2tori}
\end{equation}
When $a = b = n$, the above formula agrees with the known estimates (\ref{eq:torus-hittime}) for the $2$-torus, $H(x,y) \asymp n^2 \log n$. Similarly, when $a = n$ and $b=1$, we have $H(x,y) \asymp n^2$, which matches the $d = 1$ case. In general, we have the following result:

\begin{prop} Fix the integer $N$ (the dimension).
  Consider the simple random walk on the rectangular torus 
  \[
  \Gamma = \Z_{a_1} \times \Z_{a_2} \times \dotsb \times \Z_{a_N}, 
  \]
  where $1 \leq a_1 \leq a_2 \leq \cdots \leq a_N$. 
  Then for $x, y  \in \V$ such that $d(x,y) \asymp D$, 
  \begin{equation}
    H(x,y) \asymp_N \max \left\{ \prod_{i=1}^N a_i, a_N a_{N-1} \log \left(\frac{a_{N-1}}{a_{N-2}}\right), a_N^2\right\}.
    \label{eq:rect}
  \end{equation}
  If $N < 3$, one simply uses $N=3$ and set appropriate $a_i$'s to $1$.
  Note that with $N=3$ and $a_1=1$, the above expression reduces to (\ref{eq:2tori}).
  \label{prop:rect}
\end{prop}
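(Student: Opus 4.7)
The plan is to invoke Theorem~\ref{thm:main} with $\theta = 2$ and then carefully evaluate the resulting volume sum, splitting the range of radii according to which of the side lengths $a_i$ is comparable to the current radius. First I would verify the hypotheses for the simple (lazy) random walk on $\Gamma$: $(\El)$ is immediate, $(\CS(2))$ is automatic (noted after Definition~\ref{def:CS}), and $(\VD)$ and $(\PI(2))$ follow from standard tensorization arguments, since each cycle $\Z_{a_i}$ satisfies $(\PI(2))$ with universal constants. The diameter is $D \asymp_N a_N$, and because $\pi$ is uniform, Theorem~\ref{thm:main} yields
\[
H(x,y) \asymp_N \sum_{0 \le n \le 2D^2} \frac{\prod_{j=1}^N a_j}{\#B(y,\sqrt{n})}.
\]
The crucial geometric input is the standard volume estimate that, with the convention $a_0 := 1$, for $a_i \le r \le a_{i+1}$,
\[
\#B(y, r) \asymp_N r^{N-i} \prod_{j \le i} a_j,
\]
reflecting the transition from Euclidean growth in the still-unrestricted directions to the ball being ``filled'' in the short ones.

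Next I would evaluate the sum by Riemann-approximating it in the variable $r = \sqrt{n}$ and decomposing over the $N$ regimes $[a_i, a_{i+1}]$:
\[
H(x,y) \asymp_N \prod_j a_j + \sum_{i=0}^{N-1} \Big(\prod_{j > i} a_j\Big) \int_{a_i}^{a_{i+1}} r^{i+1-N}\, dr,
\]
where the leading $\prod_j a_j$ captures the $n = 0$ contribution. Each integral is elementary: the regime $i = N-1$ (exponent $0$) contributes $a_N(a_N - a_{N-1})$; the regime $i = N-2$ (exponent $-1$) produces the logarithm $a_{N-1} a_N \log(a_{N-1}/a_{N-2})$; and for $i \le N-3$ (exponent $\le -2$) the integral is dominated by its lower endpoint, giving at most $\prod_j a_j$, with the $i = 0$ piece saturating this bound thanks to $a_0 = 1$. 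Summing,
\[
H(x,y) \asymp_N \prod_j a_j + a_{N-1} a_N \log(a_{N-1}/a_{N-2}) + a_N(a_N - a_{N-1}).
\]

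The remaining step is to show this three-term sum is comparable to the maximum in \eqref{eq:rect}; the only subtlety is to replace $a_N(a_N - a_{N-1})$ by $a_N^2$, and I expect this small case analysis to be the main obstacle. If $a_{N-1} \le a_N/2$ the two are obviously comparable. Otherwise $a_{N-1} \asymp a_N$, and one checks that either the log term (when $a_{N-2} \ll a_{N-1}$) or $\prod_j a_j \ge a_{N-2} a_{N-1} a_N \asymp a_N^3 \ge a_N^2$ already dominates $a_N^2$; in any case, Lemma~\ref{lem:hittinglower} provides the lower bound $H(x,y) \gtrsim D^2 \asymp a_N^2$ directly. For $N < 3$, the stated convention of setting the missing $a_i$'s to $1$ makes the formula specialize correctly, recovering Cox's bounds.
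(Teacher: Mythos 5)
Your proof is correct and follows essentially the same approach as the paper: verify the Harnack hypotheses, use the ball-volume estimate $\#B(y,r)\asymp_N r^{N-i}\prod_{j\le i}a_j$ on each regime $a_i\le r\le a_{i+1}$, and feed it into Theorem~\ref{thm:main}. The main difference is organizational: the paper bounds the "core" $\sum_{n\le a_{N-2}^2}\asymp\prod_j a_j$ once and then splits the remaining range into four cases based on whether $a_N\le 2a_{N-1}$ and whether $a_{N-1}\le 2a_{N-2}$, while you evaluate all $N$ regime integrals uniformly and reconcile the resulting three-term sum with the maximum afterward. Your version is arguably more streamlined; note also that invoking Lemma~\ref{lem:hittinglower} for the $a_N^2$ lower bound is a valid shortcut, but it is not even needed, since the volume sum in Theorem~\ref{thm:main} already contains the range $D^\theta< n\le 2D^\theta$ where $V(y,n^{1/\theta})=1$, contributing $\gtrsim D^2\asymp_N a_N^2$ directly.
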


\begin{proof}
In order to apply Theorem \ref{thm:main}, we first compute the sizes of balls in $\Gamma$. 
For convenience, we set $a_0 =1$ and  introduce the notation
\[
A_{[j_1; j_2]} := \prod_{j=j_1}^{j_2} a_j.
\]
Moreover, since the torus is vertex transient, we can set $B(r) = B(x,r)$ and $V(r) = B(x,r)$ for any $x \in V$. 
Note that as $r$ grows past $a_i$, the contribution to the volume in the $i^{th}$ coordinate is fixed at $a_i$. 
Thus, for any $x \in \V$ and $0 \leq i \leq N-1$, we have 
\begin{equation}
        A_{[0;i]}  \left( \frac{r}{2} \right)^{N-i}
        \leq \# B(r) \leq  
        A_{[0;i]}  (2r)^{N-i}, \qquad \qquad \text{for } a_{i} < r \leq a_{i+1}.
        \label{eq:rect0}
\end{equation}
In particular, when $0 \leq r \leq a_{N-2}$, we know that $\# B(r) \geq 2^{-N} r^3$. 
This implies that 
\begin{equation}
A_{[0;N]} \leq \sum_{n = 0}^{a_{N-2}^2} \frac{1}{V(\sqrt{n})} 
\leq A_{[0;N]} 2^N \left( 1 + \sum_{n=1}^\infty n^{-3/2} \right). 
\label{eq:rect-a}
\end{equation}
Now we establish two useful facts. First, 
suppose that for a particular $i$, we have $a_{i+1} \leq 2 a_i$, i.e. 
$a_{i+1}/2 \leq a_i \leq a_{i+1}$. 
In this case, (\ref{eq:rect0}) become
\begin{equation}
        A_{[0;i+1]} \left(\frac 14\right) \left( \frac{r}{2} \right)^{N-(i+1)}
        \leq \# B(r) \leq  
        A_{[0;i+1]} 2 (2r)^{N-(i+1)}, \qquad \qquad \text{for } a_{i} < r \leq a_{i+1}.
        \label{eq:rect1}
\end{equation}
In other words, if $a_i$ and $a_{i+1}$ are of comparable size, the estimate that works for $i+1$ also works for $i$.

Secondly, suppose for $i < N$, we have that $a_N \leq c a_i$ for some constant $c > 0$. Then we have 
\begin{equation}
        \sum_{a_i^2 < n \leq \aii^2} \frac{1}{V(\sqrt{n})} 
        \leq C_1 (\aii^2-a_i^2) \frac{\aint{N}}{\aint{N-i} a_i^i}
        \leq C_2 \aii^2 \leq C_2 \aint N.
        \label{eq:rect2}
\end{equation}

Now, we estimate 
$H(x,y) \asymp \sum_{n = 0}^{a_N^2} \frac{1}{V(\sqrt{n})}$
depending on the relationship between $\a \leq \ai \leq \aii$: 

\begin{description}
    \item[Case 1 ($(\aii \leq 2 \ai) \, \& \, (\ai \leq 2\a) $) ]
    In this case, we know that all three of the largest sides are comparable in size. By (\ref{eq:rect2}), we have 
    \begin{align*}
        \sum_{\a^2 < n \leq \aii^2} \frac{1}{V(\sqrt{n})} 
        \leq C_3 \aint N.
    \end{align*}
    Combined with (\ref{eq:rect-a}), we have $H(x,y) \asymp A_{[0, N]}$, which is comparable to the left hand side of (\ref{eq:rect}) in this case. 
    \item[Case 2 ($(\aii \leq 2 \ai) \, \& \, (\ai \geq 2\a)$) ]
    First, by (\ref{eq:rect2}), we have 
    \begin{align*}
        \sum_{\ai^2 < n \leq \aii^2} \frac{1}{V(\sqrt{n})} 
        \leq C_4 \aint N.
    \end{align*}
    Then, 
    \begin{align*}
    \sum_{\a^2 < n \leq \ai^2} \frac{1}{V(\sqrt{n})}
    \asymp a_N a_{N-1} \sum_{\a^2 < n \leq \ai^2} \frac{1}{n}
    \asymp a_N a_{N-1} \log \left( \frac{\ai}{\a} \right).
    \end{align*}
    Combined with (\ref{eq:rect-a}), we have $H(x,y) \asymp A_{[0, N]} + a_N a_{N-1} \log \left( \frac{\ai}{\a} \right)$, giving the desired estimate. 
    \item[Case 3 ($(\aii \geq 2 \ai) \, \& \, (\ai \leq 2\a) $) ]
    By (\ref{eq:rect1}), we have 
    \begin{align*}
        \sum_{\a^2 < n \leq \aii^2} \frac{1}{V(\sqrt{n})} 
        \asymp \aii \sum_{\a^2 < n \leq \aii^2} \frac{1}{\sqrt n}
        \asymp \aii (\aii - \a) \asymp \aii^2,
    \end{align*}
    where the last asymptotic is by $2\a \leq  \aii$. 
     Combined with (\ref{eq:rect-a}), we have $H(x,y) \asymp A_{[0, N]} + a_N^2)$.
    \item[Case 4 ($(\aii \geq 2 \ai) \, \& \, (\ai \geq 2\a) $) ]
    Similar to the computation in the previous cases, we have
    \[
    \sum_{\a^2 < n \leq \ai^2} \frac{1}{V(\sqrt{n})} 
    + \sum_{\ai^2 < n \leq \aii^2} \frac{1}{V(\sqrt{n})}
        \asymp a_N a_{N-1} \log \left( \frac{\ai}{\a} \right) + a_N^2. 
    \]
     Combined with (\ref{eq:rect-a}), we have the desired estimate.
\end{description}

\end{proof}

\subsection{Spaces with Ahlfors regularity} 
\begin{defn}
    A finite graph $\Gamma=(\V,E)$ equipped with a probability measure $\pi$ satisfies 
    \emph{the Ahlfors regularity condition} if there exists $\alpha > 0$ such that 
    \(|\V| V(x,r)\asymp r^\alpha\) 
    for all $x\in \V$ and all $1\le r\le D$, where $D$ is the diameter of $\Gamma$.
\end{defn}
For such a space, $|\V| \asymp D^\alpha$ and $\pi(x)\asymp D^{-\alpha}$. 
Furthermore, 
\begin{equation}\label{Ahlfors3cases} \sum_{n=0}^{2D^\theta}\frac{1}{V(x,n^{1/\theta})}\asymp D^{\alpha} \sum_{n=1}^{2D^\theta} n^{-\frac{\alpha}{\theta}} \asymp \begin{cases}
D^{\alpha} &\text{if $\alpha> \theta$},\\
D^\theta \log D   &\text{if $\alpha = \theta$},\\
D^\theta          &\text{if $0\le \alpha<\theta$}. \end{cases}\end{equation}
Assume that  $(\Gamma,K,\pi)$ satisfies \assumptions{}, and is Ahlfors regular. Then, for any two points $a,b$  in $\V$ with  $d(a,b)\asymp D$, we have
$$H(a,b) \asymp \begin{cases}
D^{\alpha} &\text{if $\alpha> \theta$},\\
D^\theta \log D   &\text{if $\alpha = \theta$},\\
D^\theta          &\text{if $0\le \alpha<\theta$}. \end{cases}$$

\subsection{Doubling spaces with \texorpdfstring{$\theta$}{theta}-fast volume growth}  
A finite graph $(\Gamma, \pi)$ has $\theta$-fast volume growth if there exists $\epsilon$ such that the volume function of $(\Gamma,K,\pi)$ satisfies
$$\frac{V(x, r)}{\pi(x)}\ge c_0 r^{\theta+\epsilon}, \;\;1/2\le r\le D.$$
In such a case,
$$\sum_{n=0}^{2D^\theta}\frac{1}{V(x,n^{1/\theta})}
=\frac{1}{\pi(x)}\sum_{n=0}^{2D^\theta}\frac{\pi(x)}{V(x,n^{1/\theta})} 
\asymp \frac{1}{\pi(x)}.$$
Assume that  $(\Gamma,K,\pi)$ satisfies  \assumptions{} and has $\theta$-fast volume growth, for any two points $a,b$  in $\V$ with  $d(a,b)\asymp D$, we have
$$H(a,b)\asymp \frac{1}{\pi(b)}.$$
Under these hypotheses, one can in fact estimate $H(a,b)$ for all $a,b\in \V$, not just those pairs with $d(a,b)\asymp D$.  Indeed, we have $\G(b,b)\asymp 1/\pi(b)$ and, by Proposition \ref{prop:greenupper}, 
$$|\G(a,b)|\le  C\sum_{d(a,b)^\theta \leq n \leq 2D^\theta}\frac{1}{V(b,n^{1/\theta})}\le C'\frac{d(a,b)^\theta}{V(x,d(a,b))}\le C''\frac{1}{\pi(b)} \frac{1}{d(a,b)^\epsilon}.$$
Hence,
$$H(a,b)=\G(b,b)-\G(a,b)\ge \frac{1}{\pi(b)}\left( c-\frac{C''}{d(a,b)^\epsilon}\right).$$
This suffices to prove the following result.
\begin{prop} Assume $(\Gamma,K,\pi)$ satisfies \assumptions{}, and has $\theta$-fast volume growth. Then, for $a,b\in \V$,
$$H(a,b)\asymp \frac{1}{\pi(b)}.$$
\end{prop}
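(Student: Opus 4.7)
The plan is to combine the Green function estimates from Propositions \ref{prop:greenupper} and \ref{prop:greenlower}, essentially completing the outline sketched in the paragraph preceding the statement. The upper bound is immediate: Theorem \ref{thm:main} gives $H(a,b) \le C \sum_{0 \le n \le 2D^\theta} 1/V(b, n^{1/\theta})$ for \emph{all} pairs $a,b \in \V$ (not just those at distance of order $D$), and under the $\theta$-fast volume growth hypothesis the computation displayed at the opening of this subsection identifies this sum with $1/\pi(b)$ up to constants, giving $H(a,b) \le C'/\pi(b)$.

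For the lower bound, I would apply Proposition \ref{prop:hittinggreen} to write $H(a,b) = \G(b,b) - \G(a,b)$ and bound the two terms separately. Proposition \ref{prop:greenlower} gives $\G(b,b) \ge c_1/\pi(b) - C_2 D^\theta$; since $\theta$-fast volume growth applied at $r = D$ (where $V(b,D) = 1$) forces $1/\pi(b) \ge c_0 D^{\theta+\epsilon}$, the additive $D^\theta$ correction is dominated and $\G(b,b) \ge c/\pi(b)$. Proposition \ref{prop:greenupper} combined with the same growth condition produces the off-diagonal decay $|\G(a,b)| \le C''/(\pi(b)\, d(a,b)^\epsilon)$, exactly as carried out just before the statement. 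Putting these together yields
\[
H(a,b) \ge \frac{1}{\pi(b)}\left(c - \frac{C''}{d(a,b)^\epsilon}\right),
\]
which already gives $H(a,b) \ge c/(2\pi(b))$ whenever $d(a,b) \ge R_0 := (2C''/c)^{1/\epsilon}$, a threshold depending only on the structural constants.

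To close the remaining regime $1 \le d(a,b) < R_0$, I would argue by the strong Markov property: pick a vertex $w$ with $d(w,b) \ge R_0$ (reducing to a graph of bounded diameter when $D < R_0$, where the result is trivial since only finitely many configurations arise). Then
\[
H(a,b) \ge \P_a(\tau_w < \tau_b)\, H(w,b),
\]
and $H(w,b) \ge c/(2\pi(b))$ by the previous paragraph. The hitting probability $\P_a(\tau_w < \tau_b)$ is bounded below by a universal constant by exhibiting an explicit path from $a$ to $w$ of length at most a multiple of $R_0$ that avoids $b$; by ellipticity $(\El)$ the probability that the walk follows such a path is at least $p_0$ raised to the length of the path, i.e.\ a structural constant.

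The main obstacle I anticipate is this bounded-distance case. The Green function inequality degrades when $d(a,b)$ is small, and there is no a priori reason the structural constants produced above should satisfy $c > C''$ at $d(a,b) = 1$, so a genuinely geometric input is needed. The short-path-plus-ellipticity argument is clean in outline but tacitly relies on Harnack graphs being locally path-connected after removing a single vertex, which merits a careful verification. An alternative, more analytic, route would be to exploit the target identity $\sum_a \pi(a) H(a,b) = \G(b,b) \asymp 1/\pi(b)$ together with a Harnack-type oscillation bound on the function $a \mapsto H(a,b)$ over small balls around $b$, which is morally available from $(\CS(\theta))$ and $(\PI(\theta))$ via standard elliptic regularity on Harnack graphs but would require additional unpacking.
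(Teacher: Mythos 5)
Your upper bound, and your lower bound in the regime $d(a,b)\ge R_0$, are exactly the paper's argument: the authors derive the same inequality $H(a,b)\ge \pi(b)^{-1}\bigl(c-C''d(a,b)^{-\epsilon}\bigr)$ from Propositions \ref{prop:greenupper}, \ref{prop:greenlower} and \ref{prop:hittinggreen} in the paragraph preceding the statement, and then simply assert that this suffices. So up to that point your proposal and the paper coincide, and you have correctly identified the one step the paper leaves unaddressed, namely pairs at bounded distance.

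The patch you propose for that regime cannot be carried out in the stated generality, and this is a genuine gap --- in fact one that infects the statement itself as literally written. The step $\P_a(\tau_w<\tau_b)\ge c$ via a short path from $a$ to $w$ avoiding $b$ fails whenever $b$ separates $a$ from the rest of the graph, and the hypotheses do not exclude this: take the lazy simple random walk on $(\Z/n\Z)^3$ and attach a single pendant vertex $a$ to some vertex $b$. One checks that this perturbation preserves $(\El)$, $(\VD)$, $(\PI(2))$, $(\CS(2))$ (automatic for $\theta=2$) and $2$-fast volume growth with constants uniform in $n$; yet starting from $a$ the only available move is to $b$ or staying put, so $\tau_b$ is geometric with success probability $K(a,b)\ge p_0$ and $H(a,b)\le 1/p_0$, while $1/\pi(b)\asymp n^3$. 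Thus the lower bound genuinely fails for some adjacent pairs, and no argument can close the bounded-distance case without an added hypothesis (a lower bound on $d(a,b)$ in terms of the structural constants, vertex-transitivity, or a quantitative connectivity condition off single vertices). Your instinct that ``a genuinely geometric input is needed'' is exactly right; your alternative route via $\sum_a\pi(a)H(a,b)=\G(b,b)$ plus an oscillation estimate hits the same wall, since $a\mapsto H(a,b)$ is far from constant near $b$ in the pendant example. I would recommend either restricting the claim to $d(a,b)\ge R_0$ (where your proof, which is the paper's, is complete) or stating explicitly the extra geometric hypothesis under which the short-path argument goes through.
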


A rectangular torus 
\(
\Gamma = \Z_{a_1} \times \Z_{a_2} \times \dotsb \times \Z_{a_N} 
\) 
with $a_1\le \dots\le a_N$ has $2$-fast volume growth if and only if $N\ge 3$ and $a_N\approx a_{N-1}\approx a_{N-2}$.
For another example, consider the Cayley graph of the finite Heisenberg of all $3\times 3$ upper-triangular matrices with entries in $\mathbb Z/N\mathbb Z$ and entries equal to $1$ on the diagonal
with generators 
\[
\raisebox{0.5\depth}{$\begin{pmatrix} 1&\pm 1&0\\0&1&0\\0&0&1\end{pmatrix}$},
\raisebox{0.5\depth}{$\begin{pmatrix} 1&0&0\\0&1&\pm 1\\0&0&1\end{pmatrix}$}
\] 
equipped with its natural simple random walk. This example satisfies
$(\El), (\VD), (\PI(2))$, and $(\CS(2))$,
has $2$-fast volume growth and $\pi(x)\equiv 1/N^3$. 
It is not Ahlfors regular.  See \cite[Lemma 4.1]{diaconisModerateGrowthRandom1994}
and note that $\theta$-fast volume growth is different from the notion of moderate growth introduced in that paper.

\subsection{Traces on \texorpdfstring{$\Z^2$ and $\Z^3$}{the 2d and 3d integer lattice}
}
Let $\alpha \in (0,1]$ and $N \geq 5$. 
Consider the subgraph of $\Z^2$ that is traced by the area bounded by the curves
$y = \pm x^\alpha$ and $x = N$. More specifically, define 
\begin{equation*}
\V = \left 
\{ (x,y) \in \Z^2: 
  0 \leq x \leq N          \quad \& \quad
  y \leq x^\alpha           \quad \& \quad 
  y \geq -x^\alpha 
\right \},
\end{equation*}
\noindent and $\Gamma$ be the induced subgraph of $\Z^2$ by this vertex set. 
See Figure \ref{fig:trace} for an example of such a graph. 
\begin{figure}
  \centering
  \includegraphics[width=.35\textwidth]{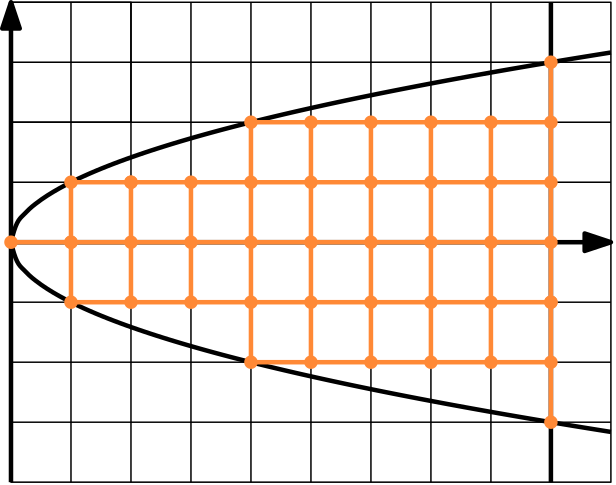}
  \caption{The trace graph with $\alpha = 1/2$ and $N=9$ shown in {\color{kanzou}orange} is a finite subgraph $\Z^2$.}
  \label{fig:trace}
\end{figure}

This graph satisfies ($\VD$) by inspection (see below). 
Moreover, it is known that subgraphs of $\Z^2$ traced by convex sets satisfy $(\PI(2))$, 
see \cite[Section 6]{diaconisNashInequalitiesFinite1996}.
Let $o = (0,0)$ and $p = (N,0)$, which are roughly diameter of the graph apart. 
Then we have 
\begin{equation*}
  \# B( o, r) \asymp r^{1+\alpha} \and \# B( o, n^{1/2}) \asymp n^{\frac{1+\alpha}2}.
\end{equation*}
By Theorem \ref{thm:main}, we have 
\[
  H(p,o) \asymp N^{1 + \alpha} \sum_{n =1}^{N^2} n^{- \frac{1+\alpha}{2}} \asymp
  \begin{cases}
    N^2 \log N &\text{if } \alpha = 1 \\
    N^2 &\text{if } \alpha \in (0,1).
  \end{cases}
\]
For the walk started at $o$, the volume growth and expected hitting time are 
\begin{align*}
  \# B(p, r) &\asymp 
  \begin{cases}
    r^2             &\text{if } r \in (0,N^\alpha)\\
    r N^\alpha       &\text{if } r \in (N^\alpha, N).
  \end{cases}
\and 
H(o,p) \asymp
  \begin{cases}
    N^2 \log N &\text{if } \alpha = 1 \\
    N^2 &\text{if } \alpha \in (0,1).
  \end{cases}
\end{align*}

Now, in $\mathbb R^3$, consider the solid body around the positive semi-axis enclosed by the surface of revolution obtained by rotating  $y = x^\alpha ( \log (1+ x))^\beta$ about the $x$-axis when $\alpha\in (0,1)$ and $\beta\in \mathbb R$. Let $\V$ be the trace of this domain in $\Z^3$, and $\Gamma = (\V,E)$ be the induced subgraph in $\Z^3$. Set $o = (0,0,0)$ and $p = (N,0,0)$. Again, one can check that ($\VD$) and ($\PI(2)$) are satisfied for any of these graphs.  Volume from $o$ is
$$\#B(o,r)\asymp r^{1+2\alpha}(\log (1+ r))^{2\beta}.$$
Using Theorem \ref{thm:main}, we get (assuming $N\ge 2$)
\[
H(p,o) = \begin{cases}
N^2                  &\text{if } \alpha \in (0,1/2) \\
N^{1+2\alpha} (\log N)^{2\beta}    &\text{if } \alpha > 1/2.
\end{cases}
\]
In the omitted border case when $\alpha=1/2$, the parameter $\beta$ plays a role and
$$\sum_{n=0}^{N} \frac{1}{V(o,\sqrt{n})}
\asymp N^{2}(\log N)^{2\beta} \sum_{n=1}^N\frac{1}{n(\log (1+ n))^{2\beta}} 
\asymp \begin{cases}
N^2(\log N)^{2\beta} &\text{if } \beta \in (1/2,+\infty) \\
N^{2} (\log N)(\log\log N)   &\text{if } \beta = 1/2
\\
N^{2} \log N   &\text{if } \beta \in (-\infty, 1/2).
\end{cases}
$$

\subsection{Birth-death chains}

Birth-death chains are a classical example of Markov chains representing the evolution of population over time. 
The state space is $\{0,1,\dotsc,N\}$ and transition probabilities can be specified by the information $\{(p_k, r_k, q_k)\}_{k=0}^N$, with $q_0 = p_N = 0$.
Then the kernel driving the chain is
\[
K(i,j) = 
\begin{cases}
p_i &\text{$j = i+1$} \\
q_i &\text{$j = i-1$} \\
r_i &\text{$j = i$} \\
0   &\text{otherwise}.
\end{cases}
\]
The reversible measure for this chain is proportional to $\prod_0^{N-1}(p_i/q_{i+1})$.
Conversely, given a positive weight function on the state space, $w_k$, $k\in \{0,\dots,N\}$, we can choose appropriate $p_k, r_k$ and $q_k$ so that the Markov chain is reversible with reversible measure proportional to $w_k$,
\(
\pi(k) = \frac{w_k}{\sum_{i=0}^N w_k }.
\)
For instance  \(\pi(i) p_i=\pi(i+1)q_{i+1}=\frac{1}{2}\min\{\pi(i),\pi(i+1)\}\) work (this is the Metropolis chain  for  \(\pi\) with symmetric simple random walk proposal). See, e.g., \cite{nashex}.

Consider the chain with weight function $w_k = (1+k)^\alpha$ for some $\alpha > 0$. 
It is clear that this chain satisfies $(\VD)$, and from \cite{nashex}, this chain also satisfies $(\PI(2))$. 
Using Theorem \ref{thm:main}, we compute
\begin{align*}
V(0,r) \asymp \left( \frac rN \right)^{1+\alpha} 
\and 
H(N,0) \asymp N^{1+\alpha} \sum_{n=1}^{N^2} n^{-\frac{\alpha+1}2} \asymp \begin{cases}
N^{\alpha+1} &\text{if $\alpha>1$},\\
N^2 \log N   &\text{if $\alpha = 1$},\\
N^2          &\text{if $0\le \alpha<1$}.
\end{cases}
\end{align*}
However,
\begin{align*}
V(N,r) \asymp \frac{rN^\alpha}{N^{1+\alpha}} = \frac{r}{N}
\and 
H(0,N) \asymp N \sum_{n=1}^{N^2} n^{-1/2} \asymp N^2.
\end{align*}
Note that in this case, $H(0,N)$ and $H(N,0)$ have different orders of magnitude when $\alpha \ge 1$.  It is harder to go from $N$ to $0$ than it is to go from $0$ to $N$ (the chain has a small tendency to push the walker towards $N$).

\begin{figure}
  \centering
     \begin{subfigure}[b]{0.2\textwidth}
         \centering
         \includegraphics[width=\textwidth]{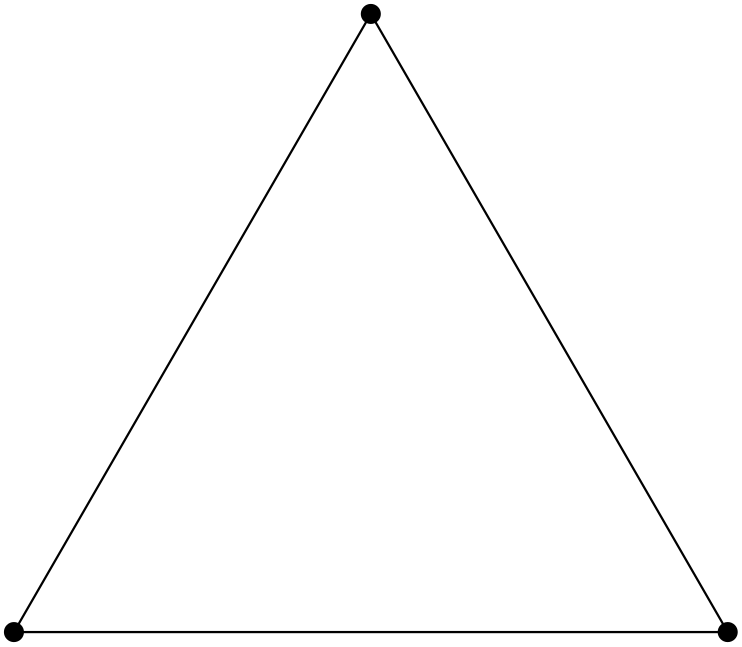}
         \caption{$k=1$}
     \end{subfigure}
     \hfill
     \begin{subfigure}[b]{0.2\textwidth}
         \centering
         \includegraphics[width=\textwidth]{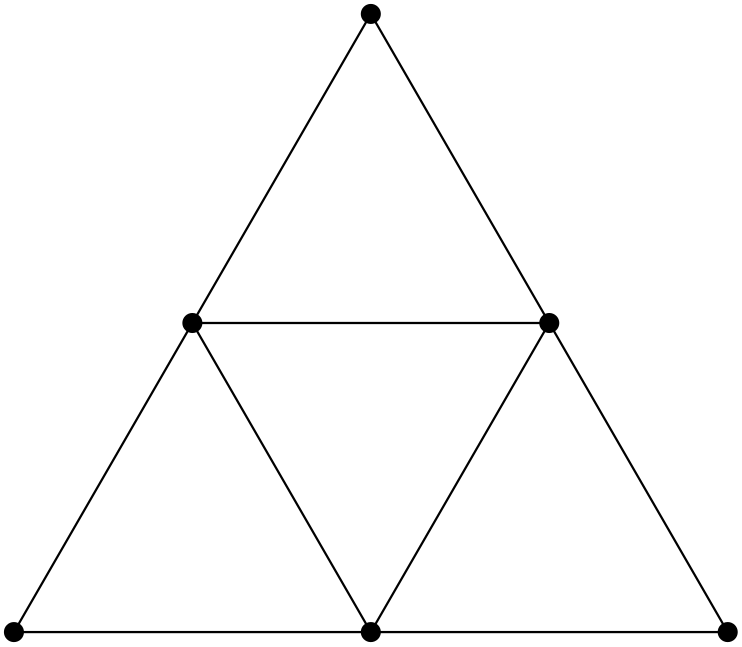}
         \caption{$k=2$}
     \end{subfigure}
     \hfill
     \begin{subfigure}[b]{0.2\textwidth}
         \centering
         \includegraphics[width=\textwidth]{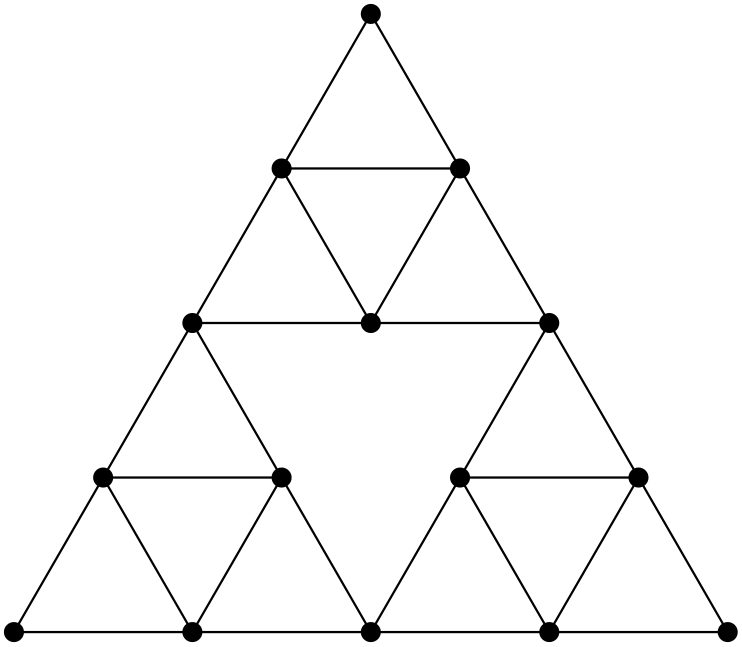}
         \caption{$k=3$}
     \end{subfigure}
    \hfill
     \begin{subfigure}[b]{0.2\textwidth}
         \centering
         \includegraphics[width=\textwidth]{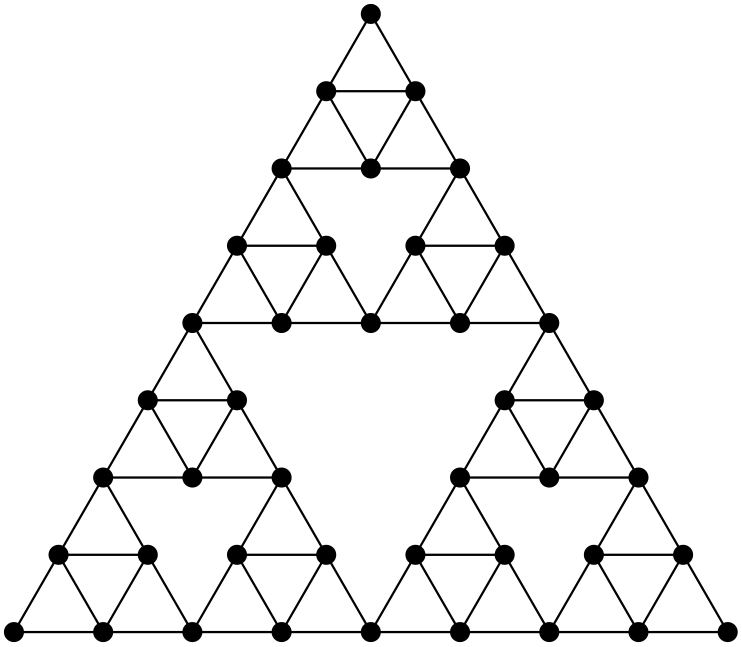}
         \caption{$k=4$}
     \end{subfigure}
  \caption{The Sierpinski triangle $\st_k$ for various $k$.}
  \label{fig:sg}
  \vspace{4mm}
     \begin{subfigure}[b]{0.2\textwidth}
         \centering
         \includegraphics[width=\textwidth]{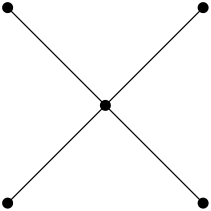}
         \caption{$k=1$}
     \end{subfigure}
     \hfill
     \begin{subfigure}[b]{0.2\textwidth}
         \centering
         \includegraphics[width=\textwidth]{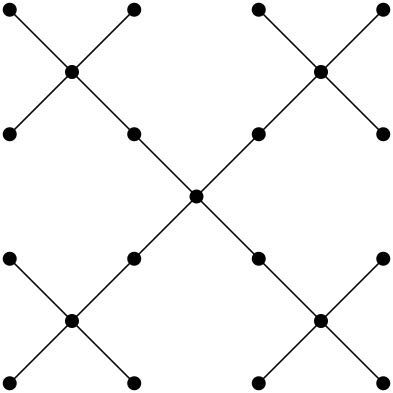}
         \caption{$k=2$}
     \end{subfigure}
     \hfill
     \begin{subfigure}[b]{0.2\textwidth}
         \centering
         \includegraphics[width=\textwidth]{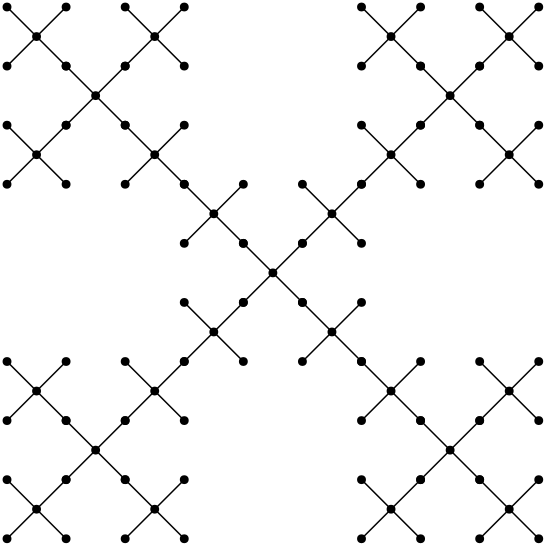}
         \caption{$k=3$}
     \end{subfigure}
    \hfill
     \begin{subfigure}[b]{0.2\textwidth}
         \centering
         \includegraphics[width=\textwidth]{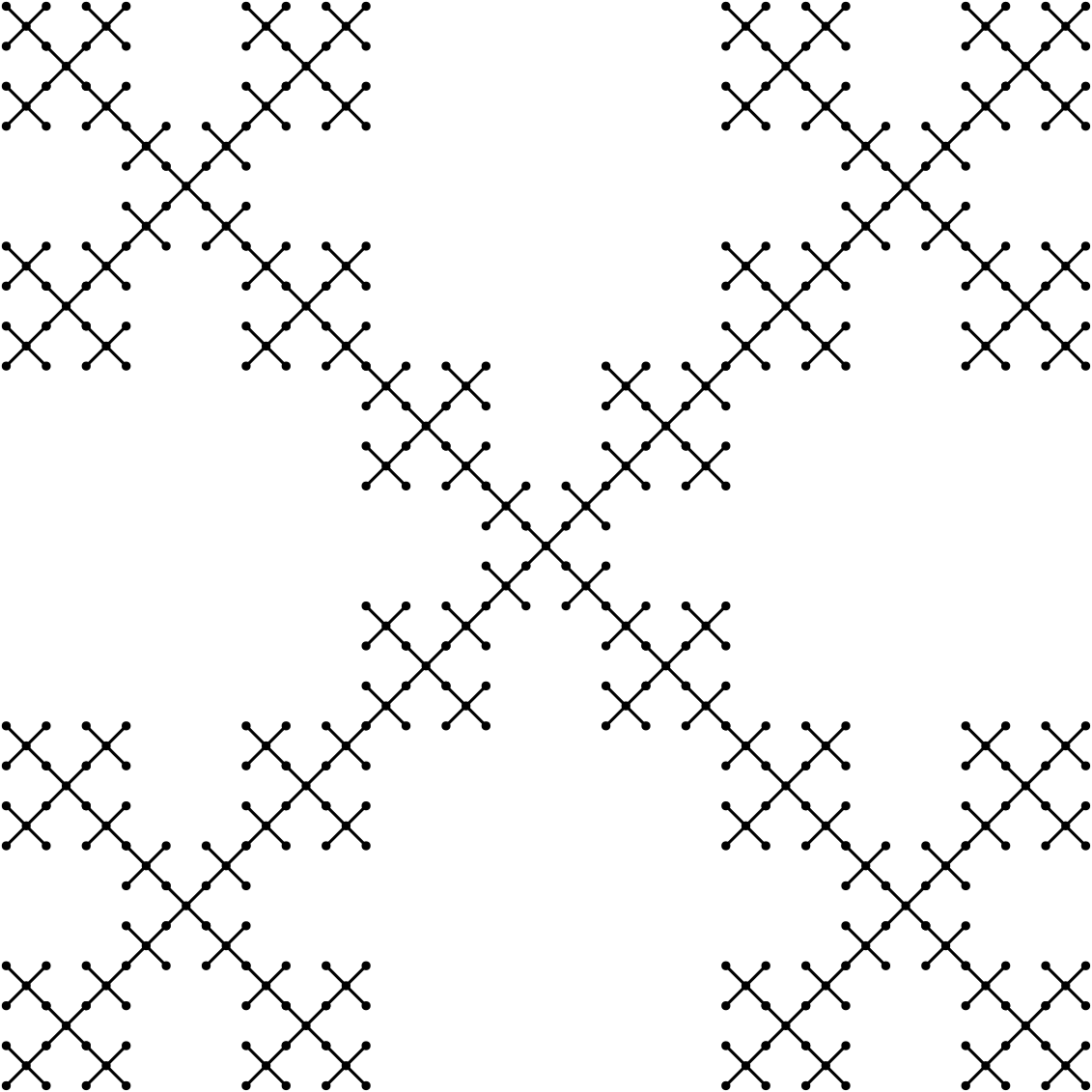}
         \caption{$k=4$}
     \end{subfigure}
  \caption{The Vicsek fractal $\vi_k$ for various $k$.}
  \label{fig:vic}
\end{figure}

\subsection{Fractal graphs}
In the work of \cite{barlowCharacterizationSubGaussianHeat2005}, the authors show that a large class of fractal graphs satisfies the desired criteria for Theorem \ref{thm:main}, with $\theta$ being the walk dimension of the fractal. 
Here we will show how to use our main theorem to compute expected hitting times on two fractal graphs.  These examples are example of Ahlfors-regular graphs.

Let $\st_k$ be the $k^{th}$ iteration of the Sierpinski triangle. $\st_1$ is the complete graph with three vertices, a triangle graph. 
At the $k^{th}$ iteration, we use three copies of $\st_{k-1}$ and glue them at the corners, see Figure \ref{fig:sg}. 
$\st_k$ has size $|\V| = 3^k$, diameter $D \asymp 2^k$, and walk dimension $\theta = \log 5/\log 2$. 
Let $o$ and $p$ be two corners of $\st_k$. 
It is known that
\begin{equation*}
  \# B(o,r) \asymp \min(r^\alpha, 3^k), 
\end{equation*}
where $\alpha = \log 3 /\log 2$.
Applying Theorem \ref{thm:main}, we get 
\begin{equation*}
  H(o,p) \asymp 3^k \sum_{n = 1}^{2^{k \theta}} n ^{-\frac{\log 3}{\log 5}}
  \asymp 3^k \left( 5^k \right) ^{1-\frac{\log 3}{\log 5} } 
  \asymp 5^k = D^\theta, \;\;\theta=\frac{\log 5}{\log 2}.
\end{equation*}

Let $\vi_k$ be the $k^{th}$ iteration of the Vicsek fractal graph. 
$\vi_0$ is a tree with one root vertex with four children. 
From $\vi_{k-1}$, the next iteration $\vi_k$ has $\vi_{k-1}$ joined at the corners with four more copies of $\vi_{k-1}$, see Figure \ref{fig:vic}.
$\vi_k$ has size $|\V| = 5^k$, diameter $D \asymp 3^k$, and walk dimension $\theta = 1+ \alpha = 1 + \log 5/\log 3$. 
Let $o$ and $p$ be two corners of $\vi_k$. 
We know that $\vi_k$ has volume growth
\begin{equation*}
  \# B(o,r) \asymp \min(r^\alpha, 5^k) ,
\end{equation*}
where $\alpha = \log 5/\log 3$.
Our main theorem gives 
\begin{equation*}
  H(o,p) \asymp 5^k \sum_{n = 1}^{3^{k \theta}} n ^{-\frac{\log 5}{\log 3+ \log 5}} 
  \asymp 5^k \left( (3 \cdot 5)^k \right)^{ \frac{\log 3}{\log 3+ \log 5}} 
  \asymp 15^k = D^\theta, \;\;\theta=1+\frac{\log 5}{\log 3}.\end{equation*}

The Vicsek example has a wide ranging extension to finite graphs that are Ahlfors-regular trees as such graphs are Harnack graphs (see \cite{barlowCharacterizationSubGaussianHeat2005}). Experts on analysis on fractals know that there are $\theta$-Harnack fractal graphs with volume growth illustrating all the three cases in (\ref{Ahlfors3cases}) but such examples are not easily available yet in the literature. There is no doubt that examples of $\theta$-Harnack graphs with more general doubling volume behavior exists as well. One major difficulty in obtaining such examples is verifying condition $(\CS(\theta))$.

\subsection{Relaxation times for random walks on lamplighter groups}
Given a finite graph $\Gamma = (\V, E)$, the lamplighter graph $\Gamma^{\diamond} = \mathbb{Z}_2 \wr G$ has the vertex set $\V^{\Gamma^\diamond} = \{0,1\}^\V \times \V$.
Given $(f,v) \in \V^{\Gamma^\diamond}$, $f$ represents the lamp configuration and $v$ the position of the lamplighter. 
For a random walk on $\Gamma$ driven by a Markov kernel $K$, we can define random walk on $\Gamma^\diamond$ was follows
\begin{equation*}
K^\diamond ( (f,v), (h,w)) = \begin{cases}
K(v,w)/4  & \text{if ($v\neq w$) \& ($f$ and $h$ agree off of $\{v,w\}$)} \\
K(v,v)/2  & \text{if ($v=w$) \& ($f$ and $h$ agree off of $v$)} \\
0         & \text{otherwise.}
\end{cases}
\end{equation*}
For an introduction to this topic, see \cite[Chapter 19]{levinMarkovChainsMixing2017}.
There are many interesting connections between the behavior of the random walk on $G^\diamond$ and the one on the underlining graph $G$, one of which is the following: 

\begin{thm}\cite[Theorem 1.2]{peresMixingTimesRandom2004}
Let $\Gamma$ be a finite vertex-transitive graph. 
Then the simple random walk on $\Gamma^\diamond$ satisfies
\[t_{rel}(\Gamma^\diamond) \asymp \max_{x,y \in \V} H_\Gamma(x,y),\]
where $H_\Gamma(x,y)$ is the expected hitting time from $x$ to $y$ for the simple random walk on $G$.
\end{thm}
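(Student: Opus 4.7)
The plan is to diagonalize $K^\diamond$ block-by-block using characters of the lamp-configuration group $\{0,1\}^\V$, identify the block that governs the spectral gap, and connect it to hitting times on $\Gamma$.

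First, for each $S\subseteq\V$, introduce the character $\chi_S(f)=(-1)^{\sum_{v\in S}f(v)}$ and the $|\V|$-dimensional subspace $\W_S$ of functions $\psi(f,v)=\chi_S(f)g(v)$, $g\colon\V\to\R$. A direct calculation, using that under a transition $v\to w$ of $K^\diamond$ the lamps at $v$ and $w$ (and only these) are uniformly and independently resampled, yields
\[
K^\diamond\psi(f,v)=\chi_S(f)\,(K^Sg)(v),
\qquad
(K^Sg)(v):=\charf_{\{v\notin S\}}\sum_{w\notin S}K(v,w)g(w).
\]
Thus each $\W_S$ is $K^\diamond$-invariant, and the spectrum of $K^\diamond$ decomposes as the disjoint union, over $S\subseteq\V$, of the spectra of the substochastic kernels $K^S$ --- the walks on $\Gamma$ killed upon entering $S$.

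Next, write $\mu_1(K^S)$ for the top eigenvalue of $K^S$. Two elementary observations reduce the problem to a single killed-walk eigenvalue. Cauchy interlacing applied to the symmetrization of $K$ and its principal $(|\V|-1)\times(|\V|-1)$ submatrix gives $\mu_1(K^{\{v_0\}})\geq\beta_1(K)$, so the $S=\varnothing$ block (whose nontrivial eigenvalues are those of $K$) never produces the eigenvalue closest to $1$. Perron--Frobenius monotonicity for nonnegative matrices gives $\mu_1(K^S)\leq\mu_1(K^{\{v_0\}})$ whenever $v_0\in S$, since $K^S\leq K^{\{v_0\}}$ entrywise. By vertex-transitivity $\mu_1(K^{\{v_0\}})$ is independent of $v_0$, and one concludes
\[
t_{rel}(\Gamma^\diamond)\asymp \frac{1}{1-\mu_1(K^{\{v_0\}})}.
\]

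It remains to show $1/(1-\mu_1(K^{\{v_0\}}))\asymp\max_{x,y}H_\Gamma(x,y)$. The killed-chain spectral expansion together with the exponential tail bound $\P_x(\tau_{v_0}>t)\leq C\mu_1^t$ gives an upper bound $\E_\pi[\tau_{v_0}]\leq C/(1-\mu_1)$; the reverse bound follows from a variational argument inserting the principal eigenfunction of $K^{\{v_0\}}$ into the Dirichlet form. Vertex-transitivity supplies the final link: $\E_\pi[\tau_{v_0}]$ does not depend on $v_0$, averaging yields $\E_\pi[\tau_{v_0}]\leq\max H_\Gamma$, while the commute-time identity $H_\Gamma(x,y)+H_\Gamma(y,x)=|\V|\,\mathcal{R}(x\leftrightarrow y)$ combined with the vertex-transitive resistance bound $\mathcal{R}(x\leftrightarrow y)\leq C\,\E_\pi[\tau_{v_0}]/|\V|$ supplies the matching bound $\max H_\Gamma\leq C\,\E_\pi[\tau_{v_0}]$.

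The principal obstacle is this last identification: although each ingredient is classical, the quantitative chain $t_{rel}(\Gamma^\diamond)\asymp 1/(1-\mu_1)\asymp\E_\pi[\tau_{v_0}]\asymp\max H_\Gamma$ requires careful bookkeeping, and vertex-transitivity is indispensable in upgrading ``stationary hitting'' to ``worst-case hitting.'' A minor side point is potential periodicity of the simple random walk on $\Gamma$ (hence of $K^\diamond$), but the block decomposition above is unaffected and the relaxation time depends only on eigenvalues near $+1$.
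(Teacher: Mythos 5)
The paper only \emph{cites} this theorem from Peres and Revelle; there is no proof in the source to compare against. Assessed on its own terms, your reconstruction follows the standard route and is essentially correct. The block decomposition is right: with the paper's definition of $K^\diamond$, a move $v\to w$ resamples the lamps on $\{v,w\}$ (or on $\{v\}$ when $v=w$) independently and uniformly, so if either endpoint lies in $S$ the average of $\chi_S$ over that lamp vanishes, which gives $K^\diamond\psi=\chi_S\cdot K^S g$ exactly as you wrote. Cauchy interlacing (to beat the $S=\varnothing$ block), Perron--Frobenius monotonicity of the killed kernel (to beat $|S|\ge 2$), and vertex-transitivity together give $\lambda_2(K^\diamond)=\mu_1(K^{\{v_0\}})$, correctly.

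The one mislabeled step is ``the reverse bound follows from a variational argument inserting the principal eigenfunction of $K^{\{v_0\}}$ into the Dirichlet form.'' Plugging $\phi_1$ into the Rayleigh quotient yields the identity $1-\mu_1=\mathcal{E}(\phi_1,\phi_1)/\|\phi_1\|_\pi^2$, not a lower bound on $\E_\pi[\tau_{v_0}]$; and the spectral expansion $\E_\pi[\tau_{v_0}]=\sum_i\langle\phi_i,\mathbf{1}\rangle_\pi^2/(1-\mu_i)$ only gives $\E_\pi[\tau_{v_0}]\gtrsim 1/(1-\mu_1)$ if you can lower-bound $\langle\phi_1,\mathbf{1}\rangle_\pi$, which is not immediate. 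The clean fix is the quasi-stationary hitting time: with $\nu\propto\pi\phi_1$ one has $\E_\nu[\tau_{v_0}]=1/(1-\mu_1)$ exactly, whence $\max_x H(x,v_0)\ge 1/(1-\mu_1)$; pair this with your spectral upper bound $\E_\pi[\tau_{v_0}]\le 1/(1-\mu_1)$ and with $\max_{x,y}H(x,y)\le 4\,\E_\pi[\tau_{v_0}]$, the latter obtained by averaging the resistance (commute-time) triangle inequality $C(x,y)\le C(x,z)+C(z,y)$ over $z\sim\pi$ and using that on a vertex-transitive graph $\E_\pi[\tau_\cdot]$ coincides with Kemeny's constant. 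That reshuffling closes the full chain $1/(1-\mu_1)\asymp\E_\pi[\tau_{v_0}]\asymp\max H_\Gamma$ without the gap.
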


Theorem \ref{thm:main} also then gives an estimate for the relaxation time for random walks on lamplighter groups. 

\begin{cor}
Suppose that the simple random walk on a vertex-transitive graph $\Gamma$ satisfies
$(\El), (\VD), (\PI(\theta))$, and $(\CS(\theta))$, for some $\theta \geq 2$.
Then, the associated random walk on $G^\diamond$ satisfies
\begin{equation*}
   t_{rel}(\Gamma^\diamond) \asymp \max_{x \in \V} 
   \sum_{0 \leq n \leq 2D_\Gamma^\theta}\frac{1}{V_\Gamma(x,n^{1/\theta})},
\end{equation*}
where $D_\Gamma$ is the diameter of $\Gamma$, and $V_\Gamma$ is normalized volume of balls in $\Gamma$. 
\end{cor}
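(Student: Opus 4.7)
The plan is to reduce the statement, via the Peres--Revelle theorem quoted just above, to estimating
\(\max_{x,y \in \V} H_\Gamma(x,y)\) and then to insert the two-sided bound of Theorem~\ref{thm:main}. The only mild subtlety is that Theorem~\ref{thm:main} was stated only for pairs $x,y$ with $d(x,y)\ge c_0 D$, so one has to argue that the max is (up to constants) attained on such pairs; vertex-transitivity is what makes this cost-free.

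\textbf{Upper bound.} Theorem~\ref{thm:main} explicitly asserts that the upper bound
\[
H(x,y) \le \volsumo{C_2}{y}
\]
holds for \emph{all} pairs $x,y\in\V$. Taking the maximum over $x,y$ and using vertex-transitivity (so that $V_\Gamma(y,r)$ does not depend on $y$), I get
\[
\max_{x,y\in\V} H_\Gamma(x,y) \;\le\; C_2 \max_{y\in\V} \sum_{0\le n\le 2D_\Gamma^\theta}\frac{1}{V_\Gamma(y,n^{1/\theta})}.
\]

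\textbf{Lower bound.} Fix any pair $x_0,y_0\in\V$ with $d(x_0,y_0)=D_\Gamma$; these certainly satisfy the hypothesis $d(x_0,y_0)\ge c_0 D_\Gamma$ of Theorem~\ref{thm:main} with $c_0=1$. The lower bound of that theorem gives
\[
\max_{x,y\in\V} H_\Gamma(x,y) \;\ge\; H_\Gamma(x_0,y_0) \;\ge\; C_1 \sum_{0\le n\le 2D_\Gamma^\theta}\frac{1}{V_\Gamma(y_0,n^{1/\theta})},
\]
and by vertex-transitivity the right-hand side equals the max over $y\in\V$ of the same quantity.

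\textbf{Conclusion.} Combining the two displays gives
\[
\max_{x,y\in\V} H_\Gamma(x,y) \;\asymp\; \max_{x\in\V}\sum_{0\le n\le 2D_\Gamma^\theta}\frac{1}{V_\Gamma(x,n^{1/\theta})},
\]
and the Peres--Revelle identity $t_{rel}(\Gamma^\diamond)\asymp \max_{x,y} H_\Gamma(x,y)$ finishes the argument. The main ``obstacle'' is essentially cosmetic: Theorem~\ref{thm:main}'s lower bound is restricted to far-apart pairs, but since the max of $H_\Gamma(x,y)$ is at least $H_\Gamma$ evaluated on a diametral pair, and the sum on the right is vertex-independent by transitivity, no extra work is needed to bridge the gap.
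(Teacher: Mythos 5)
Your proposal is correct and takes essentially the same approach as the paper: invoke the Peres--Revelle theorem, use the unrestricted upper bound of Theorem~\ref{thm:main} directly, and for the lower bound restrict to a diametral pair so that Theorem~\ref{thm:main}'s lower bound applies. The only cosmetic difference is that you make the role of vertex-transitivity (in making the sum independent of the center) fully explicit, whereas the paper leaves that step implicit.
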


\begin{proof}
Let $(\Gamma,K,\pi)$ be the Markov kernel associated with the lazy simple random walk on $\Gamma$, 
and $\G$ be the Green function for the base graph, $\Gamma$. 
Note that the upper bound of Theorem \ref{thm:main} can be applied for all $x,y \in \Gamma$, which gives the upper bound directly. 
For the lower bound, we can restrict to $x,y$ far apart and apply Theorem \ref{thm:main} to get
\[
t_{rel}(\Gamma^\diamond) \geq \max_{x,y: d(x,y) = D_\Gamma} H_\Gamma (x,y) 
\gtrsim \max_{x \in \V} \sum_{0 \leq n \leq 2D_\Gamma^\theta}\frac{1}{V_\Gamma(x,n^{1/\theta})}.
\]
\end{proof}

In addition, given Proposition \ref{prop:rect}, we have the following result for lamplighter groups on rectangular tori:
\begin{cor} 
  Define the graph
  \[
  \Gamma = \Z_{a_1} \times \Z_{a_2} \times \dotsb \times \Z_{a_N}, 
  \]
  where $1 \leq a_1 \leq a_2 \leq \cdots \leq a_N$. 
  Consider corresponding random walk on $\Gamma^\diamond$, we have 
  \begin{equation}
    t_{rel} (\Gamma^\diamond) \asymp_N \max \left\{ \prod_{i=1}^N a_i, a_N a_{N-1} \log \left(\frac{a_{N-1}}{a_{N-2}}\right), a_N^2\right\}.
    \label{eq:rect}
  \end{equation}
\end{cor}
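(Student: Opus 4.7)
The plan is to reduce this corollary to a direct combination of the previous corollary (which expresses $t_{rel}(\Gamma^\diamond)$ as a volume-sum) with the volume-sum estimates already carried out in the proof of Proposition \ref{prop:rect}. Concretely, I would first verify that $\Gamma = \Z_{a_1}\times\cdots\times \Z_{a_N}$ is a vertex-transitive graph for which $(\El), (\VD), (\PI(2)), (\CS(2))$ hold with $\theta=2$. Vertex-transitivity is immediate from the product-of-cyclic-groups structure. Ellipticity and $(\VD)$ follow by inspection of the shapes of balls (anisotropic product neighborhoods, as discussed in Section \ref{sect:tori}). The Poincar\'e inequality $(\PI(2))$ is classical for products of cycles. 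Finally, $(\CS(2))$ is automatic, as noted after Definition \ref{def:CS}.

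Next I would apply the previous corollary to $\Gamma$ with $\theta=2$, obtaining
\begin{equation*}
t_{rel}(\Gamma^\diamond) \asymp_N \max_{x \in \V} \sum_{0 \leq n \leq 2D_\Gamma^2} \frac{1}{V_\Gamma(x, n^{1/2})}.
\end{equation*}
By vertex-transitivity, $V_\Gamma(x,r)$ does not depend on $x$, so the maximum collapses and it suffices to evaluate the right-hand sum at a single vertex. Moreover, the diameter of $\Gamma$ satisfies $D_\Gamma \asymp_N a_N$, so the summation range is $0 \leq n \lesssim a_N^2$, matching exactly the range that appears in Proposition \ref{prop:rect}.

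The final step is to note that the proof of Proposition \ref{prop:rect} has already evaluated this sum case-by-case (according to the relative sizes of $\a, \ai, \aii$) and shown that
\begin{equation*}
\sum_{n=0}^{2a_N^2} \frac{1}{V(x, n^{1/2})} \asymp_N \max\left\{\prod_{i=1}^N a_i,\, a_N a_{N-1}\log\!\left(\frac{a_{N-1}}{a_{N-2}}\right),\, a_N^2\right\}.
\end{equation*}
Substituting this into the previous display yields the desired estimate.

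There is no real obstacle here: all the analytic work has already been done in Proposition \ref{prop:rect} and the preceding corollary. The only points requiring care are (i) observing that vertex-transitivity eliminates the $\max_x$, and (ii) confirming that the hypotheses of the previous corollary genuinely apply to $\Gamma$, which reduces in the $\theta=2$ setting to the well-known $(\PI(2))$ for products of cycles together with the automatic $(\CS(2))$.
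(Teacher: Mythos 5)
Your proof is correct and takes the same approach the paper intends: the paper presents this corollary with no separate argument, treating it as an immediate combination of the previous corollary (the Peres--Revelle reduction of $t_{rel}(\Gamma^\diamond)$ to a volume sum) with the case analysis already carried out in the proof of Proposition \ref{prop:rect}, which is exactly what you did. Your observations that vertex-transitivity removes the $\max_x$ and that the rectangular torus satisfies the Harnack hypotheses with $\theta=2$ are the only details that needed to be supplied, and you supplied them.
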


\newpage
\appendix

\section{Lemmas}

\begin{lem}
  Let $C_2 > 0$, $\theta \geq 2$ and the Markov chain $\syst$ satisfies $(\VD)$.
  There exists a $C_0 > 0$ such that for all $x,y \in \V$ and  $t\in [d(x,y),d(x,y)^\theta]$,
  \begin{equation*}
    \frac{1}{V\left(x, t^{1 / \theta}\right)}\exp \left(-C_2 \left(\frac{d(x, y)^\theta}{t}\right)^{1 /(\theta-1)}\right)
    \leq \frac{C_0}{V\left(x,d(x,y)\right)}
  \end{equation*}
  \label{lem:appen1}
\end{lem}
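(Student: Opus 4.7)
The key change of variable is to set $r = d(x,y)$ and $s = t^{1/\theta}$, so that the range $t\in[r,r^\theta]$ becomes $s\in[r^{1/\theta},r]$. In particular $s\le r$, which is the reason the inequality is non-trivial: $V(x,s)\le V(x,r)$, so the factor $1/V(x,t^{1/\theta})=1/V(x,s)$ is larger than $1/V(x,r)$, and the exponential factor must compensate. The exponent itself becomes $-C_2(r/s)^{\theta/(\theta-1)}$, where $r/s\ge 1$.

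The plan is to use $(\VD)$ to upper bound $V(x,r)/V(x,s)$ by a polynomial in $r/s$. Iterating $(\VD)$ gives $V(x,2^k s)\le C_D^k V(x,s)$, and choosing $k=\lceil\log_2(r/s)\rceil$ yields
\[
\frac{V(x,r)}{V(x,s)}\le C_D\,(r/s)^{\alpha},\qquad \alpha=\log_2 C_D.
\]
Equivalently,
\[
\frac{1}{V(x,s)}\le \frac{C_D\,(r/s)^{\alpha}}{V(x,r)}.
\]

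Substituting this into the left-hand side of the desired inequality, the claim reduces to the purely scalar statement
\[
C_D\,u^{\alpha}\exp\!\bigl(-C_2\,u^{\theta/(\theta-1)}\bigr)\le C_0\qquad\text{for all }u\ge 1,
\]
where $u=r/s$. Since $\theta\ge 2$ gives $\theta/(\theta-1)\ge 1>0$, the function $u\mapsto u^{\alpha}\exp(-C_2 u^{\theta/(\theta-1)})$ is continuous on $[1,\infty)$ and tends to $0$ as $u\to\infty$, hence is bounded. This yields a constant $C_0$ depending only on $C_2$, $\theta$, and $C_D$, and finishes the proof.

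There is no real obstacle; the only thing to be careful about is that the range $[d(x,y),d(x,y)^\theta]$ is exactly what guarantees $s\le r$ (so $u\ge 1$ and the polynomial/exponential comparison applies), and that the constants produced depend only on the doubling constant $C_D$, on $\theta$, and on $C_2$, but not on $x$, $y$, or $t$.
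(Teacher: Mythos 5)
Your proof is correct and takes essentially the same route as the paper: iterate $(\VD)$ to get the polynomial bound $V(x,r)/V(x,s)\le C_D(r/s)^{\log_2 C_D}$, substitute $u=r/s=d(x,y)/t^{1/\theta}\ge 1$, and reduce to the scalar fact that $u^{\alpha}\exp(-C_2 u^{\theta/(\theta-1)})$ is bounded on $[1,\infty)$. The only cosmetic difference is that the paper verifies the scalar inequality by comparing derivatives of the two sides after taking logarithms, whereas you appeal to continuity plus vanishing at infinity; both are fine.
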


\begin{proof}
  Let $A \geq 1$ be the doubling constant of $\Gamma$.
  Since $\syst$ satisfies $(\VD)$, for $0 \leq m \leq n \leq \infty$, we know that
  \begin{equation}
    \frac{V(x,n)}{V(x,m)} \leq A \left( \frac nm \right)^{\frac{\log A}{\log 2}}.
    \label{eq:doublecomp}
  \end{equation}
  See \cite[Lemma 5.1]{diaconisModerateGrowthRandom1994} for a proof.
  Then for all $d(x,y) \leq t \leq d(x,y)^\theta$,
  \begin{equation}
  \frac{V\left(x,t^{1/\theta}\right)}{V(x,d(x,y))} \geq \frac 1A \left( \frac{t^{1/\theta}}{d(x,y)} \right)^{\frac{\log A}{\log 2}}.
  \label{eq:doubletheta}
  \end{equation}
  Let $z = d(x,y)/t^{1/\theta}$.
  Then, choose $C_0 > 0$ large enough so that
  \[
  \frac{\log A}{\log 2} \log z \leq \log C_0 - \log A + C_2 z^{\frac \theta {\theta -1}}.
  \]
  for all $z \geq 1$.
  We know that this is possible because the derivative of the left hand side is positive decreasing, while that of the right hand side is positive increasing.
  After rearranging and taking exponential on both sides, we get
  \[
  \frac 1A \left( \frac{t^{1/\theta}}{d(x,y)} \right)^{\frac{\log A}{\log 2}}
  \geq C_0 \exp \left( -C_2 \left( \frac{d(x,y)^\theta}{t} \right)^{\frac 1{\theta-1}} \right).
  \]
  Combined with (\ref{eq:doubletheta}), this gives us the desired result.
\end{proof}

\begin{lem}
Let $\syst$ be a Markov chain satisfying $(\VD)$, $x \in \V$ and $c_0 \in (0,1) $.
Then there exists a constant $C > 0$, that depends on $c_0$, such that
\[ V(x,c_0 D) > C. \]
\label{lem:constantvol}
\end{lem}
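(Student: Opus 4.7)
The plan is to exploit iterated volume doubling together with the fact that $V(x,D)=\pi(B(x,D))=\pi(\V)=1$, since by the definition of diameter every vertex lies within graph distance $D$ of $x$.

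Starting from the doubling hypothesis $V(x,2r)\leq C_D\, V(x,r)$, I first invert to get $V(x,r)\geq C_D^{-1}V(x,2r)$, and then iterate $k$ times to obtain
\begin{equation*}
V(x,r)\geq C_D^{-k}\, V(x,2^k r).
\end{equation*}
I would then apply this with $r=c_0 D$, choosing $k=k(c_0):=\lceil \log_2(1/c_0)\rceil$, which is the smallest integer for which $2^k c_0\geq 1$. For this choice $2^k r\geq D$, so $B(x,2^k r)\supseteq B(x,D)=\V$ and therefore $V(x,2^k r)=1$. Substituting gives
\begin{equation*}
V(x,c_0 D)\geq C_D^{-k(c_0)},
\end{equation*}
and setting $C=C_D^{-\lceil\log_2(1/c_0)\rceil}>0$ finishes the argument.

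There is no real obstacle here: the proof is a one-line consequence of iterated doubling once one observes that balls of radius $D$ exhaust the finite graph. The only minor point to be careful about is that $(\VD)$ is stated for real $r>0$ so the iteration is legitimate, and that $C$ depends on $c_0$ and on the doubling constant $C_D$ but on nothing else, matching the statement.
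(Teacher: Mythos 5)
Your proof is correct and is essentially the same as the paper's: the paper also combines iterated volume doubling with $V(x,D)=1$, but packages the iteration as the standard polynomial-doubling estimate $V(x,n)/V(x,m)\leq A\,(n/m)^{\log A/\log 2}$ (citing an earlier lemma) rather than iterating the factor $C_D$ by hand as you do. The resulting constants differ only cosmetically.
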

\begin{proof}
Let $A > 0$ be the doubling constant of $\Gamma$.
Then, we use the same formula as in the previous lemma, (\ref{eq:doublecomp}) with $m = c_0 D$ and $n = D$:
\begin{align*}
\frac{1}{V(x,c_0 D)}
\leq A \left( \frac 1 {c_0} \right)^{\frac{\log A}{\log 2}}.
\end{align*}
\end{proof}



\bibliographystyle{alpha}
\bibliography{hittimes}

\appendix

\end{document}